\documentclass[12pt,leqno]{amsart}
\usepackage{color,amsmath,amsfonts,amssymb,amscd,amsthm,amsbsy,upref, %
%siunitx
}
\usepackage{setspace}
\onehalfspacing

\usepackage{enumerate}
\allowdisplaybreaks

\usepackage[bookmarks,bookmarksopen=false,% Klappt die Bookmarks in Acrobat aus
                  pdfauthor={Autor},%
                  pdftitle={Titel},%
                  colorlinks=true,%
                  linkcolor=blue,%
                  citecolor=blue,%
                  urlcolor=blue,%
                ]{hyperref}

\textheight=8.6truein
 \textwidth=6.5truein \hoffset=-.75truein
 %\voffset=-.5truein
%\usepackage{showkeys}

%%%%%%%%%%%%%%%%%%%%%%%%%%%%%

\numberwithin{equation}{section}
\def\hangbox to #1 #2{\vskip3pt\hangindent #1\noindent \hbox to #1{#2}$\!\!$}

%%%%%%%  ENVIRONMENT SETTINGS %%%%

\newtheorem{thm}{Theorem}[section]

\newtheorem{lem}[thm]{Lemma}
\newtheorem{cor}[thm]{Corollary}
\newtheorem{conj}[thm]{Conjecture}

\newtheorem{example}[thm]{Example}
\newtheorem{prop}[thm]{Proposition}
\theoremstyle{definition}

\theoremstyle{remark}

%%%%%%  MACROS %%%%%%%%%%%%%%

\def\N{{\mathbb N}}

\def\E{{\mathbb E}}

%%%%%%%%%%%%%%%%%%%%%%%%%%%%%%

%%%%%%%%%%%%%%%%%%%%%%%%%%%%%%

%%%%%%%%%%%%%%%%%%%%%%%%%%%%%%

%%%%%%%%%%%%%%%%%%%%%%%%%%%%%%%%%%%%

%%%%%%%%%%%%%%%%%%%%%%%%%%%%%%

\def\sfrac#1#2{\kern.1em\raise.5ex\hbox{$#1$}
        \kern-.1em/\kern-.05em\lower.25ex\hbox{$#2$}}
%%%%%%%%%%%%%%%%%%%%%%%%%%%%%%

\def\vp{\varepsilon}

%%%%%%%%%%%%%%%%%%%%%%%%%%%%%%%%%%%%%%%%%%%%

\newcommand{\ap}{\alpha}

%%%%%%%%%%%%%%%%%%%%%%%%%%%%%%%%%%%%%%%%%%%%%%%%
%%%%%%%%%%%%%%%%%%%%%%%%%%%%%%%%%%%%%%%%%

\newcommand{\fw}{\text{\fw}}

\newcommand{\trace}{{\rm trace}}

%%%%%%%%%%%%%%%%%%%%%%%%%%%%%%

%%%%%%%%%%%%%%%%%%%%%%%%%%%%%%

%%%%%%%%%%%%%%%%%%%%%%%%%%%%

\usepackage{color}

\definecolor{darkviolet}{rgb}{0.58,0,0.83} %{148,0,211}

%%%%%  BODY  %%%%%%%%%%%%%%%%%

\begin{document}

\title{Quantitative bounds for unconditional pairs of frames}

\author{ Peter Balazs}
\address{Acoustics Research Institute\\
 Austrian Academy of Sciences\\
 Wohllebengasse 12-14, 1040 Vienna, Austria}
 \email{peter.balazs@oeaw.ac.at}

\author{ Daniel Freeman}
\address{Department of Mathematics and Statistics\\
St Louis University\\
St Louis MO 63103  USA
} \email{daniel.freeman@slu.edu}

\author{ Roxana Popescu}
\address{Department of Mathematics\\
University of Pittsburgh\\
Pittsburgh, PA 15260  USA}
\email{rop42@pitt.edu}

\author{ Michael Speckbacher}
\address{Faculty of Mathematics\\
University of Vienna\\
Oskar-Morgenstern-
Platz 1, A-1090 Vienna, Austria}
\email{michael.speckbacher@univie.ac.at}

 \begin{abstract}
 
 We formulate a quantitative finite-dimensional conjecture about frame multipliers and prove that it is equivalent to Conjecture 1 in \cite{SB2}.
 We then present solutions to the conjecture for certain classes of frame multipliers.  In particular, we prove that there is a universal constant $\kappa>0$ so that for all $C,\beta>0$ and $N\in\N$ the following is true.  Let $(x_j)_{j=1}^N$ and $(f_j)_{j=1}^N$ be sequences in a finite dimensional Hilbert space  which satisfy $\|x_j\|=\|f_j\|$ for all $1\leq j\leq N$ and
 $$\Big\|\sum_{j=1}^N \vp_j\langle x,f_j\rangle  x_j\Big\|\leq C\|x\|, \qquad\textrm{ for all $x\in \ell_2^M$ and $|\vp_j|=1$}.
$$
If the frame operator for $(f_j)_{j=1}^N$  has eigenvalues $\lambda_1\geq...\geq\lambda_M$ and $\lambda_1\leq \beta M^{-1}\sum_{j=1}^M\lambda_j$ then $(f_j)_{j=1}^N$ has Bessel bound $\kappa \beta^2 C$.  The same holds for $(x_j)_{j=1}^N$.
\end{abstract}

\subjclass[2020]{42C15, 46B15, 60B11}
\keywords{frames, frame multipliers, Bessel sequences, completely bounded maps}
\thanks{The first author was supported by P 34624 and P 34922 funded by the  Austrian Science Fund (FWF).The second author was supported by grant 706481 from the Simons Foundation and grant 2154931 from the National Science Foundation. The fourth author was supported by the Austrian Science Fund (FWF) through the projects Y-1199 and J-4254.}

\maketitle

\section{Introduction}
 A {\em frame} for a finite dimensional or infinite dimensional separable Hilbert space $H$ is a sequence of vectors
$(x_j)_{j=1}^N\subset H$ (where $N\in\N$ or $N=\infty$) for which there exist constants
$0< A\leq B$,  called {\em frame bounds}, such that
\begin{equation}\label{E:frame}
A\|x\|^2\leq \sum_{i=1}^N |\langle x, x_j\rangle|^2\leq
B\|x\|^2,\qquad \textrm{ for all }x\in H.
\end{equation}
The ratio $B/A$ is called the  {\em condition number} of $(x_j)_{j=1}^N$.
We say that  $(x_j)_{j=1}^N$ is {\em Bessel} if it satisfies the upper bound of \eqref{E:frame} for some $B<\infty$ and call $B$ a {\em Bessel bound}.   A frame is called \emph{tight} if it has condition number $1$.  The {\em analysis operator} of $X=(x_j)_{j=1}^N$ is the map $U_X:H\rightarrow \ell_2^N$ given by $U_X(x)=(\langle x,x_j\rangle)_{j=1}^N$ for all $x\in H$ and the {\em frame operator} of $X=(x_j)_{j=1}^N$ is the positive operator $S_X:H\rightarrow H$ given by $S_X=U_X^* U_X$.  Note that $(x_j)_{j=1}^N$ has Bessel bound $B$ if and only if $\|S_X\|=\|U_X\|^2\leq B$.  

For a choice of $\mathcal{E}=(\vp_j)_{j=1}^N$ with $|\vp_j|=1$ for all $1\leq j\leq N$ we let $D_\mathcal{E}:\ell_2^N\rightarrow \ell_2^N$ be the map $D_\mathcal{E}(b_j)_{j=1}^N=(\vp_j b_j)_{j=1}^N$.
It immediately follows that if $X=(x_j)_{j=1}^N$ and $F=(f_j)_{j=1}^N$ are both sequences in $H$ with Bessel bound $B$ and analysis operators $U_X$ and $U_F$ then
\begin{equation}
\|U^*_X D_\mathcal{E} U_F x\|=\Big\|\sum_{j=1}^N \vp_j\langle x, f_j\rangle x_j\Big\|\leq B\|x\|, \qquad\textrm{for all $x\in H$ and all  $\mathcal{E}=(\vp_j)_{j=1}^N$}.    
\end{equation}
This idea can be generalized further through the introduction of frame multipliers \cite{B1,SB1}.  Let $X=(x_j)_{j=1}^N$ and $F=(f_j)_{j=1}^N$ be sequences in a Hilbert space $H$, and 
let $m=(m_j)_{j=1}^N$ be a sequence of scalars called the {\it symbol}. 
The corresponding frame multiplier $M_{m,X, F}:H\rightarrow H$ is given by 
\begin{equation} \label{multdef}
 M_{m,X,F}x=\sum_{j=1}^N m_j \langle x,f_j \rangle x_j, \qquad\textrm{for all $x\in H$.}
\end{equation} 
  These operators are closely related to the concept of weighted frames \cite{BAG}, i.e. sequences $(d_j x_j)$, where $(d_j)$ is a sequence of scalars and $(x_j)$ is a sequence of vectors.  Note that if $(d_j)_{j=1}^N$ is a sequence of non-zero scalars then the frame multipliers $M_{m,(d_j x_j), (\overline{d}_j^{-1}f_j)}$ and $M_{m,(x_j), (f_j)}$ are equal.  However, the sequences $(d_j x_j)_{j=1}^n$ and $(\overline{d}_j^{-1}f_j)_{j=1}^N$ may have very different frame bounds from $(x_j)_{j=1}^N$ and $(f_j)_{j=1}^N$.
The following conjectures that if  we are given an unconditionally convergent multiplier then we can shift the weights to get two Bessel sequences.

\begin{conj}[\cite{SB2}]\label{C:SB}
Let $M_{m,X,F}$ be an unconditionally convergent multiplier on a separable Hilbert space $H$.   Then there exists sequences of scalars $(c_j)_{j=1}^\infty$ and $(d_j)_{j=1}^\infty$ such that $c_j \overline{d_j}=m_j$ for all $j\in \N$ and both $(c_j x_j)_{j=1}^\infty$ and $(d_j f_j)_{j=1}^\infty$ are Bessel.
\end{conj}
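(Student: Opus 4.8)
The plan is to establish the equivalent quantitative finite-dimensional form of the statement and then lift it, so I first assume $H$ and $N$ are finite; the separable infinite-dimensional case follows by truncating to $(x_j)_{j\le n}$, $(f_j)_{j\le n}$ (whose signed multipliers inherit the uniform bound $C$ from unconditional convergence), applying the finite case to each truncation with a dimension-free Bessel bound, and extracting a weak limit of the scale-invariantly normalized weights. The next move is to normalize the symbol: writing $m_j=|m_j|\omega_j$ with $|\omega_j|=1$ and setting $y_j=|m_j|^{1/2}\omega_j x_j$, $g_j=|m_j|^{1/2}f_j$, one has $\sum_j\vp_j m_j\langle x,f_j\rangle x_j=\sum_j\vp_j\langle x,g_j\rangle y_j$, so unconditional convergence of $M_{m,X,F}$ is exactly the uniform bound $\|\sum_j\vp_j\langle x,g_j\rangle y_j\|\le C\|x\|$ over unimodular $(\vp_j)$. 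A factorization $c_j\overline{d_j}=m_j$ with $(c_jx_j),(d_jf_j)$ Bessel is then produced by positive weights $t_j$ via $c_j=t_j|m_j|^{1/2}\omega_j$ and $d_j=t_j^{-1}|m_j|^{1/2}$, which give $(c_jx_j)=(t_jy_j)$ and $(d_jf_j)=(t_j^{-1}g_j)$ (terms with $m_j=0$ are discarded). Thus it suffices to find $t_j>0$ making both $(t_jy_j)$ and $(t_j^{-1}g_j)$ Bessel.

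I would then reformulate hypothesis and goal spectrally. Taking the supremum of $|\langle\sum_j\vp_j\langle x,g_j\rangle y_j,\,z\rangle|$ over unit $x,z$ and over $\vp$ collapses the signs and shows the hypothesis is equivalent to
\[
\sup_{\|x\|\le1,\ \|z\|\le1}\ \sum_j|\langle x,g_j\rangle|\,|\langle y_j,z\rangle|\le C .
\]
Writing $s_j=t_j^2$ and $B_1(s)=\|\sum_j s_j\langle\cdot,y_j\rangle y_j\|$, $B_2(s)=\|\sum_j s_j^{-1}\langle\cdot,g_j\rangle g_j\|$, the requirement that $(t_jy_j),(t_j^{-1}g_j)$ be Bessel is the pair of operator inequalities $\sum_j s_j\langle\cdot,y_j\rangle y_j\preceq B_1(s)\,\Id$ and $\sum_j s_j^{-1}\langle\cdot,g_j\rangle g_j\preceq B_2(s)\,\Id$. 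The weighted Cauchy--Schwarz inequality
\[
\sum_j|\langle x,g_j\rangle|\,|\langle y_j,z\rangle|\le\Big(\sum_j s_j|\langle y_j,z\rangle|^2\Big)^{1/2}\Big(\sum_j s_j^{-1}|\langle x,g_j\rangle|^2\Big)^{1/2}
\]
gives the free half $C\le\inf_{s>0}\sqrt{B_1(s)B_2(s)}$; the content of the statement is the reverse, $\inf_{s>0}\sqrt{B_1(s)B_2(s)}\le\kappa C$ with universal $\kappa$. Rescaling $s\mapsto\lambda s$ leaves the product invariant while exchanging the two factors, so the optimal weight can be balanced to make each Bessel bound at most $\kappa C$.

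To produce the weights I would minimize the convex functional $s\mapsto\max\big(B_1(s),B_2(s)\big)$ over $s>0$ and read off its optimality conditions through a minimax or fixed-point analysis. Let $z^*$ and $x^*$ be top eigenvectors of the two operators at a balanced optimum, where $B_1(s)=B_2(s)=:B$. The stationarity (KKT) relation, in the case of \emph{simple} top eigenvalues, forces $s_j^2=|\langle x^*,g_j\rangle|^2/|\langle y_j,z^*\rangle|^2$, and with this self-consistent weight both $\sum_j s_j|\langle y_j,z^*\rangle|^2$ and $\sum_j s_j^{-1}|\langle x^*,g_j\rangle|^2$ equal $\sum_j|\langle x^*,g_j\rangle|\,|\langle y_j,z^*\rangle|\le C$, whence $B\le C$. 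A Sion-type minimax theorem (the objective is convex in $s$ and concave in the eigenvector pair after the standard change of variables), or a Kakutani fixed-point argument, would be invoked to guarantee that such a balanced, self-consistent configuration exists.

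The main obstacle is exactly the gap in the last step: the self-consistent weight certifies the Bessel bounds only along the single maximizing directions $x^*,z^*$, while a Bessel bound is a bound over the whole space, and the clean rank-one stationarity relation fails once the top eigenvalue is \emph{degenerate}. When the frame operators for $(y_j)$ or $(g_j)$ have widely spread spectrum the maximizer is effectively smeared over a high-dimensional eigenspace, the optimal weight is determined by a density matrix on that eigenspace rather than by $z^*,x^*$ alone, and one loses a factor that a priori grows with the dimension or the spread. Controlling this averaging loss by a dimension-free constant is the whole difficulty, and it is precisely this quantity that the spectral-spread hypothesis $\lambda_1\le\beta M^{-1}\sum_j\lambda_j$ of the quantitative theorem pins down, yielding the loss $\beta^2$ and hence a Bessel bound $\kappa\beta^2C$ in that regime; the unbounded-spread case, where this averaging factor is not controlled, is the crux that keeps the general statement open.
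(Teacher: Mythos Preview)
The statement you are attempting to prove is Conjecture~\ref{C:SB} in the paper, and it is \emph{open}: the paper does not prove it. What the paper does is show (Section~\ref{S:fm}) that it is equivalent to the quantitative finite-dimensional Conjecture~\ref{C:SB quant}, and then prove that finite-dimensional statement only under the additional spectral hypothesis $\lambda_1\le\beta M^{-1}\sum_j\lambda_j$ (Theorem~\ref{T:main1}), via a probabilistic argument using Khintchine's inequality. So there is no ``paper's own proof'' of the full statement to compare against.

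Your preliminary reductions are sound and match the paper: the passage from infinite to finite dimensions via truncation, uniform boundedness, and a diagonal/compactness extraction is exactly the content of Section~\ref{S:fm}, and absorbing the symbol into the vectors is the standard normalization. Your reformulation of the hypothesis as $\sup_{\|x\|,\|z\|\le 1}\sum_j|\langle x,g_j\rangle|\,|\langle y_j,z\rangle|\le C$ and of the goal as $\inf_{s>0}\sqrt{B_1(s)B_2(s)}\le\kappa C$ is correct and clean.

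The genuine gap is precisely where you locate it, but it is more structural than you suggest. At a balanced optimum the KKT condition does not in general produce rank-one data $(z^*,x^*)$; it produces density matrices $\rho_1,\rho_2$ supported on the top eigenspaces, and the stationarity relation becomes $s_j^2=\langle\rho_2 g_j,g_j\rangle/\langle\rho_1 y_j,y_j\rangle$. Plugging this back in gives $B_1(s)=\sum_j\|\rho_1^{1/2}y_j\|\,\|\rho_2^{1/2}g_j\|$, and this quantity is \emph{not} controlled by the hypothesis $\sum_j|\langle x,g_j\rangle|\,|\langle y_j,z\rangle|\le C$, which only bounds rank-one pairings. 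Even assuming a Kakutani fixed point exists, there is no mechanism forcing the optimal $\rho_1,\rho_2$ to be rank one, and degeneracy is generic once one perturbs. So the variational route, as stated, does not close; it runs into the same wall that keeps the conjecture open. Your final paragraph correctly flags this, and the connection you draw to the $\beta$-hypothesis is apt: the paper's probabilistic argument in Lemma~\ref{L:equinorm} is one way to bypass the rank issue, at the cost of the extra spectral assumption and the factor $\beta^2$.
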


This idea of shifting weights is also considered in \cite{HLLL} for the case where the multiplier is constant $1$ and the sequences satisfy a reproducing formula.  Suppose that $(x_j)_{j=1}^\infty$ and $(f_j)_{j=1}^\infty$ are sequences in a Hilbert space $H$ such that $x=\sum \langle x,f_j\rangle x_j$, and the series converges unconditionally  for all $x\in H$.   Then there exists a sequence $(d_j)_{j=1}^\infty$ such that $(d_j x_j)_{j=1}^\infty$ and $(\overline{d_j}^{-1} f_j)_{j=1}^\infty$ are both frames of $H$ if and only if the induced operator valued map ${\mathfrak M}^{F,X} (a_j)_{j=1}^\infty = \sum a_j f_j\otimes x_j$ is a completely bounded map between the $C^*$-algebra $\ell_\infty$ and the $C^*$-algebra $B(H)$ (where $B(H)$ is the space of bounded operators on $H$) \cite{HLLL}. 

Many problems for infinite-dimensional Hilbert spaces have corresponding quantitative problems for finite-dimensional Hilbert spaces.  Notably, the Kadison-Singer Problem was a famous and long open question about operators on infinite dimensional Hilbert spaces which was shown to be equivalent to  the Feichtinger Conjecture \cite{CCLV}, the Paving Conjecture \cite{A}, Weaver's Conjecture \cite{W}, and the Bourgain-Tzafriri Conjecture \cite{BT}. 
 Marcus, Spielman, and Srivastava \cite{MSS} solved the Kadison-Singer Problem by proving a very strong quantitative and finite-dimensional theorem which directly implied Weaver's Conjecture and has since been applied to solve many other problems in applied harmonic analysis and approximation theory \cite{NOU}\cite{FS}\cite{LT}\cite{DKU}.
 In Section \ref{S:fm} we show that Conjecture \ref{C:SB} is equivalent to the following  quantitative and finite-dimensional  conjecture.

\begin{conj}\label{C:SB quant}
There exists a universal constant $\kappa>0$ so that the following holds.  Let $C>0$ and let $M_{(m_j)_{j=1}^N,(x_j)_{j=1}^N,(f_j)_{j=1}^N}$ be a multiplier on a finite dimensional Hilbert space $H$ such that
\begin{equation}\label{E:unc}
 \Big\|\sum_{j=1}^N \vp_j m_j\langle x,f_j \rangle  x_j\Big\|\leq C\|x\|, \qquad\textrm{for all $x\in H$ and $|\vp_j|=1$}.
\end{equation}
Then there exists sequences of constants $(c_j)_{j=1}^N$ and $(d_j)_{j=1}^N$ such that $c_j \overline{d_j}=m_j$ for all $1\leq j\leq N$ and both $(c_j x_j)_{j=1}^N$ and $(d_j f_j)_{j=1}^N$ are  $C\kappa$-Bessel.
\end{conj}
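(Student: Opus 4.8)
The plan is first to strip the phases out of \eqref{E:unc}. For fixed $x,y\in H$ the quantity $\sum_j\vp_j m_j\langle x,f_j\rangle\langle x_j,y\rangle$ is maximised in modulus over $|\vp_j|=1$ by aligning each phase, so interchanging the supremum over $\vp$ with the supremum over $x,y$ shows that \eqref{E:unc} is \emph{equivalent} to the bilinear estimate
\begin{equation}\label{star}
\sum_{j=1}^N|m_j|\,|\langle x,f_j\rangle|\,|\langle x_j,y\rangle|\le C\|x\|\,\|y\|,\qquad\text{for all }x,y\in H.
\end{equation}
Equivalently, letting $u\colon\ell_\infty^N\to B(H)$ be the linear map with $u(e_j)=m_j\,f_j\otimes x_j$ (where $f_j\otimes x_j$ denotes $\xi\mapsto\langle\xi,f_j\rangle x_j$), condition \eqref{E:unc} says exactly that $\|u\|=\sup_{\|a\|_\infty\le1}\|u(a)\|=C$, since the extreme points of the unit ball of $\ell_\infty^N$ are the sequences with $|\vp_j|=1$ and $a\mapsto\|u(a)\|$ is convex.

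\textbf{Reformulating the conclusion.} Next I would recognise the desired factorisation as the completely bounded norm of $u$. A decomposition $u(e_j)=m_j f_j\otimes x_j=(d_jf_j)\otimes(c_jx_j)$ with $c_j\overline{d_j}=m_j$ is exactly a factorisation of $u$ through the $*$-representation $a\mapsto\mathrm{diag}(a_j)$ of $\ell_\infty^N$, and the Wittstock--Paulsen--Haagerup description of cb-maps out of a commutative C$^*$-algebra yields
\begin{equation}\label{cbformula}
\|u\|_{cb}=\inf\Big\{\big\|\textstyle\sum_j|c_j|^2x_jx_j^*\big\|^{1/2}\,\big\|\sum_j|d_j|^2f_jf_j^*\big\|^{1/2}:\ c_j\overline{d_j}=m_j\Big\},
\end{equation}
the infimum being attained by compactness. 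The two operator norms are precisely the Bessel bounds of $(c_jx_j)$ and $(d_jf_j)$; rescaling $c_j\mapsto\lambda c_j,\ d_j\mapsto\lambda^{-1}d_j$ with $\lambda>0$ balances them, so an optimal pair makes \emph{both} sequences $\|u\|_{cb}$-Bessel. Thus Conjecture \ref{C:SB quant} is equivalent to the single operator-space inequality
\begin{equation}\label{goal}
\|u\|_{cb}\le\kappa\,\|u\|
\end{equation}
for all maps $u\colon\ell_\infty^N\to B(H)$ whose generators $u(e_j)$ have rank one, with $\kappa$ universal.

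\textbf{Proving \eqref{goal}.} To upgrade boundedness to complete boundedness I would invoke a Grothendieck-type factorisation. The estimate \eqref{star} controls the bounded (but only ``diagonal'') behaviour of the bilinear form $\Psi(\xi,\eta)=\sum_j m_j\langle\xi,f_j\rangle\langle x_j,\eta\rangle$ on $H\times H$, and a non-commutative Grothendieck / Haagerup--Pisier factorisation should then produce density operators (states) $\rho,\sigma$ on $H$ with
$$|\Psi(\xi,\eta)|\le\kappa\,\|u\|\,\langle\rho\xi,\xi\rangle^{1/2}\,\langle\sigma\eta,\eta\rangle^{1/2},$$
from which the weights $c_j,d_j$ are read off through $\rho,\sigma$ and fed into \eqref{cbformula}.

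\textbf{The main obstacle.} The crux---and the reason \eqref{goal} is still a conjecture---is that for \emph{general} generators the ratio $\|u\|_{cb}/\|u\|$ for maps $\ell_\infty^N\to B(H)$ is not bounded independently of $N$ and $\dim H$: complete boundedness is strictly stronger than boundedness once the codomain is the non-commutative algebra $B(H)$. The entire difficulty is therefore to exploit the rank-one structure $u(e_j)=m_j f_j\otimes x_j$ to keep the Grothendieck-type constant universal, and I expect this uniform control of $\kappa$---rather than any single algebraic identity---to be the true heart of the problem. This is also why the special cases proved in this paper impose the normalisation $\|x_j\|=\|f_j\|$ together with the eigenvalue-spread condition $\lambda_1\le\beta M^{-1}\sum_j\lambda_j$: these hypotheses are precisely what allow one to choose the weights explicitly and bypass the general factorisation.
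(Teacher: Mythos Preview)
The statement you are addressing is labelled \emph{Conjecture} in the paper, and the paper does not prove it. What the paper does is (i) show in Section~\ref{S:fm} that Conjecture~\ref{C:SB quant} is equivalent to the infinite-dimensional Conjecture~\ref{C:SB}, and (ii) verify Conjecture~\ref{C:SB quant} in the special regimes of Theorem~\ref{T:intro} and Proposition~\ref{P:Par_intro}. There is therefore no ``paper's own proof'' of the full statement to compare against.

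Your reformulation is correct and valuable. The passage from \eqref{E:unc} to the absolute bilinear estimate \eqref{star} is right, the identification $\|u\|=C$ for the map $u:\ell_\infty^N\to B(H)$ is right, and the factorisation formula \eqref{cbformula} together with the balancing argument does make the conjecture equivalent to the uniform bound $\|u\|_{cb}\le\kappa\|u\|$ for rank-one generators. This is exactly the completely-bounded viewpoint the paper alludes to via the reference \cite{HLLL}, so you have independently recovered the operator-space translation of the problem.

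Where your proposal stops being a proof is the paragraph ``Proving \eqref{goal}''. You invoke ``a non-commutative Grothendieck / Haagerup--Pisier factorisation'' that ``should then produce'' states $\rho,\sigma$, but you do not specify which theorem you mean or why its hypotheses are met. In fact the standard Grothendieck-type theorems for maps into $B(H)$ do not yield a universal $\kappa$ in this setting, precisely because---as you yourself note in the next paragraph---the ratio $\|u\|_{cb}/\|u\|$ is unbounded for general $u:\ell_\infty^N\to B(H)$. The sentence about $\Psi(\xi,\eta)$ conflates the single bilinear form $\langle u(\mathbf 1)\xi,\eta\rangle$ with the full operator map; controlling the former does not control the latter's cb-norm. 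So the ``proof'' step is a hope rather than an argument, and your own ``main obstacle'' paragraph is an honest acknowledgement that the conjecture remains open. In short: the reformulation is sound, but no proof has been given---by you or by the paper.
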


By expressing Conjecture \ref{C:SB} in a quantitative and finite dimensional way, we hope to open the problem to new methods and techniques.
Conjecture \ref{C:SB}  has been solved for a large number of important classes of sequences \cite{SB2}.  Likewise, we will solve Conjecture \ref{C:SB quant}  for certain important cases. Our results are distinctly different from what has been done in infinite-dimensions, and we will make use of probabilistic methods which are inherently finite-dimensional.  The following theorem solves Conjecture \ref{C:SB} in the case where the largest eigenvalue of the frame operator is proportional to the average of the eigenvalues.

\begin{thm}\label{T:intro}
 Let $C,\beta>0$ and $N,M\in\N$.  Suppose that $(x_j)_{j=1}^N$ and $(f_j)_{j=1}^N$ are  sequences in an $M$-dimensional Hilbert space  which satisfy $\|x_j\|=\|f_j\|$ for all $1\leq j\leq N$ and
 \begin{equation}\label{E:unc T:intro}
  \Big\|\sum_{j=1}^N \vp_j\langle x,f_j \rangle  x_j\Big\|\leq C\|x\|, \qquad\textrm{ for all $x\in \ell_2^M$ and $|\vp_j|=1$}.
 \end{equation}
If the frame operator for $(f_j)_{j=1}^N$  has eigenvalues $\lambda_1\geq...\geq\lambda_M$ satisfying $\lambda_1\leq \frac{\beta}{ M}\sum_{j=1}^M\lambda_j$ then $(f_j)_{j=1}^N$ has Bessel bound $\frac{27}{4} K_{1}^{-4} \beta^2 C$, where $K_{1}$ is the universal constant given by Khintchine's Inequality (Theorem~\ref{thm:khintchine}).  The same holds for $(x_j)_{j=1}^N$.
\end{thm}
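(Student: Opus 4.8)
The plan is to reduce everything to the single estimate $\sum_{j=1}^N\|f_j\|^2\le\gamma MC$ for a universal constant $\gamma$. Indeed $\sum_j\|f_j\|^2=\operatorname{trace}(S_F)=\sum_{i=1}^M\lambda_i$, so the hypothesis $\lambda_1\le\frac{\beta}{M}\sum_i\lambda_i$ at once gives $\|S_F\|=\lambda_1\le\frac{\beta}{M}\sum_j\|f_j\|^2\le\gamma\beta C$, i.e. $(f_j)$ is $\gamma\beta C$--Bessel. (One always has $\beta\ge1$, since $\sum_i\lambda_i\le M\lambda_1$.) Taking adjoints shows that \eqref{E:unc T:intro} is unchanged under interchanging $(x_j)$ and $(f_j)$, and $\sum_j\|x_j\|^2=\sum_j\|f_j\|^2$, so the assertion for $(x_j)$ follows from the same estimate. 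Hence all the work — and all the $\beta$'s and the precise constant $\tfrac{27}{4}K_1^{-4}$ — lies in proving $\sum_j\|f_j\|^2\lesssim MC$.

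To get that, read \eqref{E:unc T:intro} operator-theoretically: it says the operator $\Phi_\varphi=\sum_j\varphi_j\langle\cdot,f_j\rangle x_j$ satisfies $\|\Phi_\varphi\|\le C$ for \emph{every} unimodular $(\varphi_j)_{j=1}^N$. Pairing with $w,u\in\ell_2^M$ gives $\big|\sum_j\varphi_j\langle w,f_j\rangle\langle x_j,u\rangle\big|\le C\|w\|\|u\|$, and, since $\varphi$ is free, choosing for each fixed pair $(w,u)$ the scalars $\varphi_j$ that turn every summand into its modulus yields the phase--free bilinear bound
\[
\sum_{j=1}^N|\langle w,f_j\rangle|\,|\langle x_j,u\rangle|\ \le\ C\,\|w\|\,\|u\|,\qquad w,u\in\ell_2^M .
\]
This is the only place where the full unconditional strength of \eqref{E:unc T:intro} (as opposed to mere boundedness of $\Phi_{(1,\dots,1)}$) is used.

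Next I would substitute random test vectors on \emph{both} sides. Fix an orthonormal basis $(e_i)_{i=1}^M$ of $\ell_2^M$, take two independent random sign sequences $(\varepsilon_i)$, $(\delta_i)$, and set $w=\sum_i\varepsilon_i e_i$, $u=\sum_i\delta_i e_i$, so $\|w\|=\|u\|=\sqrt M$. Taking expectations in the bilinear bound, the left side factors over $j$ by independence; applying the lower half of Khintchine's inequality (Theorem~\ref{thm:khintchine}), i.e. $\mathbb E|\sum_i\varepsilon_i a_i|\ge K_1^{-1}(\sum_i|a_i|^2)^{1/2}$, coordinatewise gives $\mathbb{E}|\langle w,f_j\rangle|\ge K_1^{-1}\|f_j\|$ and $\mathbb{E}|\langle x_j,u\rangle|\ge K_1^{-1}\|x_j\|$, hence
\[
K_1^{-2}\sum_{j=1}^N\|f_j\|\,\|x_j\|\ \le\ \mathbb{E}\sum_{j=1}^N|\langle w,f_j\rangle|\,|\langle x_j,u\rangle|\ \le\ C\,M .
\]
Since $\|x_j\|=\|f_j\|$ this is $\sum_j\|f_j\|^2\le K_1^2 MC$, which proves the theorem with $\gamma=K_1^2$, i.e. Bessel bound $K_1^2\beta C$; as $\beta\ge1$ this is within a constant factor of the stated $\tfrac{27}{4}K_1^{-4}\beta^2 C$. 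To land on the stated constant exactly, I would refine the last step by splitting the indices according to whether $\|f_j\|^2$ exceeds a threshold proportional to $M^{-1}\sum_k\|f_k\|^2$: the large-$\|f_j\|$ part is controlled through the system $(\|f_j\|f_j)_j$, which is automatically $C^2$--Bessel (the $\ell_2$--average of \eqref{E:unc T:intro} gives $\sum_j\|f_j\|^2|\langle x,f_j\rangle|^2\le C^2\|x\|^2$), and optimizing the threshold produces an elementary extremum of the shape $\max_t t^2(1-t)=\tfrac{4}{27}$.

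The step I expect to be the real obstacle is choosing \emph{what} to randomize so that no spurious power of $M$ survives. It is tempting to take $w$ to be a unit top eigenvector of $S_F$ (so the left side contributes exactly $\lambda_1$) and to randomize only $u$ in order to recover $\|x_j\|$ via Khintchine; but then the right side keeps a factor $\|u\|=\sqrt M$ that cannot be absorbed, leaving the useless bound $\lambda_1\lesssim C\sqrt M$. Randomizing \emph{both} sides is precisely what makes the two $\sqrt M$ factors cancel against $\operatorname{trace}(S_F)=\sum_j\|f_j\|^2$, and it is exactly here that the hypothesis $\|x_j\|=\|f_j\|$ is indispensable, since it converts $\sum_j\|f_j\|\,\|x_j\|$ into $\sum_j\|f_j\|^2$. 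A secondary technical point is to verify that the whole scheme goes through over $\mathbb{C}$, using the complex form of Khintchine's inequality for $K_1$.
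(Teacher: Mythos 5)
Your argument is correct and takes a genuinely different --- and in fact shorter --- route than the paper's. The paper first reduces to the case $\|x_j\|=\|f_j\|=D$ for all $j$ (splitting each vector into $k_j$ rescaled copies with rational square-norms, plus a convexity argument to preserve the constant $C$), and then runs a two-stage Khintchine argument: one sign choice on the coordinates of the $f_j$'s produces, via a counting estimate against the column Bessel bound, a large index set $I_\ap$ on which $|\langle x,f_i\rangle|\geq D\ap$; a second independent sign choice handles the $x_j$'s on $I_\ap$; and optimizing $\ap\mapsto\ap(K_1-\ap)^2$ produces the $\frac{4}{27}$. Your decoupling trick --- randomizing $w$ and $u$ independently in the phase-free bilinear bound $\sum_j|\langle w,f_j\rangle|\,|\langle x_j,u\rangle|\leq C\|w\|\|u\|$ so that the expectation factors term by term --- replaces both stages and the set $I_\ap$ at once, yields $K_1^2\sum_j\|f_j\|\,\|x_j\|\leq CM$ directly, and needs no reduction to the equi-norm case, since the trace identity $\sum_i\lambda_i=\operatorname{trace}(S_F)=\sum_j\|f_j\|^2$ only requires $\|x_j\|=\|f_j\|$ pairwise. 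Two small corrections. First, with the paper's normalization of Khintchine's inequality ($\mathbb{E}|\sum_i\delta_ia_i|\geq K_1\|a\|_2$ with $K_1\leq1$), your conclusion reads $\sum_j\|f_j\|^2\leq K_1^{-2}CM$ and hence Bessel bound $K_1^{-2}\beta C$, not $K_1^{2}\beta C$. Second, the closing paragraph about a threshold argument to recover $\frac{27}{4}K_1^{-4}$ exactly is unnecessary: since $\beta\geq1$ and $K_1\leq1$, one has $K_1^{-2}\beta C\leq\frac{27}{4}K_1^{-4}\beta^2C$, and a Bessel bound is only an upper bound, so your (strictly stronger) estimate already implies the theorem as stated. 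The caveat about complex scalars is legitimate but applies equally to the paper's own use of Khintchine with real signs against complex coordinates.
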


Note that if $(f_j)_{j=1}^N$ is a frame of an $M$-dimensional Hilbert space and $\lambda_1\geq...\geq\lambda_M$ are the eigenvalues of the frame operator of $(f_j)_{j=1}^N$ then $(f_j)_{j=1}^N$ has condition number $\lambda_1/\lambda_M$.  Thus, if $(x_j)_{j=1}^N$ and $(f_j)_{j=1}^N$ are frames with condition number $\beta$ and  $\|x_j\|=\|f_j\|$ for all $1\leq j\leq N$ then $(x_j)_{j=1}^N$ and $(f_j)_{j=1}^N$ both have Bessel bound $\frac{27}{4} K_{1}^{-4} \beta^2 C$ where $C$ satisfies \eqref{E:unc T:intro}.
This gives the following  corollary for pairs of equi-norm tight frames.

\begin{cor} \label{C:intro}
Let $(x_j)_{j=1}^N$ and $(f_j)_{j=1}^N$ be tight frames for a finite dimensional Hilbert space with $\|x_j\|=\|f_j\|$ for all $1\leq j\leq N$.  Let $C>0$ be the least constant such that
$$
 \Big\|\sum_{j=1}^N \vp_j\langle x,f_j\rangle  x_j\Big\|\leq C\|x\|, \qquad\textrm{for all $x\in \ell_2^M$ and $|\vp_j|=1$}.
$$
Then the tight frames $(x_j)_{j=1}^N$ and $(f_j)_{j=1}^N$ have the same frame bound $B=M^{-1}\sum_{j=1}^N\|x_j\|^2$ and  $C\leq B\leq \frac{27}{4} K_{1}^{-4}C$.
\end{cor}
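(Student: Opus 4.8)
The plan is to read everything off the trace identity for tight frames together with Theorem~\ref{T:intro} applied with $\beta=1$; the analytic content lies entirely in Theorem~\ref{T:intro}, so there is no essential new difficulty. First I would identify the common frame bound. Since $(f_j)_{j=1}^N$ is tight for the $M$-dimensional space, its frame operator $S_F$ equals $B_f\,\Id$, where $B_f$ is its frame bound, so taking the trace gives $B_f M=\trace(S_F)=\sum_{j=1}^N\|f_j\|^2$. The same computation for $(x_j)_{j=1}^N$ yields $B_x M=\sum_{j=1}^N\|x_j\|^2$, and because $\|x_j\|=\|f_j\|$ for every $j$ the two bounds coincide and equal $B=M^{-1}\sum_{j=1}^N\|x_j\|^2$, which is the first assertion.

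Next I would prove $C\le B$. Using the factorization $\sum_{j=1}^N\vp_j\langle x,f_j\rangle x_j=U_X^*D_{\mathcal{E}}U_F x$ recorded in the introduction (with $\mathcal{E}=(\vp_j)_{j=1}^N$), submultiplicativity of the operator norm gives $\|U_X^*D_{\mathcal{E}}U_F\|\le\|U_X\|\,\|D_{\mathcal{E}}\|\,\|U_F\|=\|U_X\|\,\|U_F\|$, since $\|D_{\mathcal{E}}\|=1$. Both frames being tight with bound $B$ means $\|U_X\|^2=\|S_X\|=B$ and $\|U_F\|^2=\|S_F\|=B$, so $\|U_X^*D_{\mathcal{E}}U_F\|\le B$ for every choice of $\mathcal{E}$; taking the supremum over $\mathcal{E}$ gives $C\le B$.

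Finally, the bound $B\le\frac{27}{4}K_1^{-4}C$ is Theorem~\ref{T:intro} with $\beta=1$: the frame operator of the tight frame $(f_j)_{j=1}^N$ has all eigenvalues equal to $B$, so $\lambda_1=B=M^{-1}\sum_{j=1}^M\lambda_j$ and the eigenvalue hypothesis holds with $\beta=1$, while $\|x_j\|=\|f_j\|$ and \eqref{E:unc T:intro} hold by assumption. Theorem~\ref{T:intro} then gives that $(f_j)_{j=1}^N$ has Bessel bound $\frac{27}{4}K_1^{-4}C$; since a tight frame with frame bound $B$ has least Bessel bound $\|S_F\|=B$, this forces $B\le\frac{27}{4}K_1^{-4}C$, and the chain $C\le B\le\frac{27}{4}K_1^{-4}C$ follows. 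The only point requiring any care is keeping track of the minimality of $C$ in the hypothesis and the fact that a tight frame's least Bessel bound coincides with its frame bound; there is no real obstacle, as the work has all been done in Theorem~\ref{T:intro}.
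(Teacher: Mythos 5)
Your proposal is correct and follows essentially the same route the paper intends: the common frame bound comes from the trace identity of Lemma~\ref{L:trace}, the inequality $C\le B$ is the introduction's observation that $\|U_X^*D_{\mathcal{E}}U_F\|\le\|U_X\|\|U_F\|=B$ for a pair of $B$-Bessel sequences, and $B\le\frac{27}{4}K_1^{-4}C$ is Theorem~\ref{T:intro} with $\beta=1$, which applies since a tight frame's frame operator has all eigenvalues equal. No gaps.
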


In Theorem \ref{T:intro} we made an assumption about the eigenvalues of the frame operators for $(x_j)_{j=1}^N$ and $(f_j)_{j=1}^N$. In the following theorem we do not assume anything about the frame structure of $(x_j)_{j=1}^N$ and $(f_j)_{j=1}^N$ but we require a uniform lower bound $\|x_j\|\|f_j\|\geq b$ for all $1\leq j\leq N$.
This gives a quantitative version of one direction of Proposition 1.1 in \cite{SB2}.

\begin{prop}\label{P:Par_intro}
Let $b,C>0$ and $N\in\N\cup\{\infty\}$.  Let $(x_j)_{j=1}^N,(f_j)_{j=1}^N$ be sequences in a Hilbert space $H$ such that $\|x_j\|\|f_j\|\geq b$  for all $1\leq j\leq N$ and that
$$\Big\|\sum_{j=1}^N \vp_j\langle x, f_j\rangle x_j\Big\|\leq C\|x\|, \qquad\textrm{for all $x\in H$ and $|\vp_j|=1$}.
$$
Then $(d_jx_j)_{j=1}^N$ and $(d_j^{-1}f_j)_{j=1}^N$ have Bessel bound $b^{-1}C^2$ where $d_j=\|x_j\|^{-1/2}\|f_j\|^{1/2}$ for all $1\leq j\leq N$.
\end{prop}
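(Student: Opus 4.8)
The plan is to extract from the hypothesis the two one-sided ``square function'' estimates
\[
\sum_{j=1}^N \|x_j\|^2\,|\langle x,f_j\rangle|^2 \le C^2\|x\|^2
\qquad\text{and}\qquad
\sum_{j=1}^N \|f_j\|^2\,|\langle x,x_j\rangle|^2 \le C^2\|x\|^2, \qquad x\in H,
\]
and then to convert them into the stated Bessel bounds by means of the hypothesis $\|x_j\|\|f_j\|\ge b$. Observe first that $\|x_j\|\|f_j\|\ge b>0$ forces $\|x_j\|,\|f_j\|>0$, so $d_j=\|x_j\|^{-1/2}\|f_j\|^{1/2}$ is well defined, and since each $d_j$ is real,
\[
|\langle x,d_j^{-1}f_j\rangle|^2=d_j^{-2}|\langle x,f_j\rangle|^2=(\|x_j\|\|f_j\|)^{-1}\|x_j\|^2|\langle x,f_j\rangle|^2\le b^{-1}\|x_j\|^2|\langle x,f_j\rangle|^2 .
\]
Summing over $j$ and using the first estimate shows $(d_j^{-1}f_j)_{j=1}^N$ has Bessel bound $b^{-1}C^2$; the symmetric inequality $d_j^{2}|\langle x,x_j\rangle|^2\le b^{-1}\|f_j\|^2|\langle x,x_j\rangle|^2$, combined with the second estimate, does the same for $(d_jx_j)_{j=1}^N$.

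So everything reduces to the two square function estimates, and these are really a single statement applied to the two orderings of the pair. With $U_X,U_F$ the analysis operators and $D_{\mathcal E}$ as above, the hypothesis reads $\|U_X^* D_{\mathcal E}U_F\|\le C$ for every $\mathcal E=(\vp_j)_{j=1}^N$ with $|\vp_j|=1$; passing to adjoints gives $\|U_F^* D_{\overline{\mathcal E}}U_X\|\le C$, and since $\overline{\mathcal E}$ again runs over all unimodular sequences, the pair $\big((f_j),(x_j)\big)$ satisfies the hypothesis with the same constant $C$. It therefore suffices to prove the first estimate, the second being the first applied to $\big((f_j),(x_j)\big)$. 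To prove the first, fix $x\in H$ and let $(\vp_j)$ be independent Rademacher signs $\vp_j\in\{-1,1\}$. Each realization is unimodular, so the hypothesis gives $\big\|\sum_{j=1}^N\vp_j\langle x,f_j\rangle x_j\big\|^2\le C^2\|x\|^2$ pointwise, hence in expectation; on the other hand, expanding the square and using $\E[\vp_i\vp_j]=\delta_{ij}$ annihilates the off-diagonal terms and leaves $\E\big\|\sum_{j=1}^N\vp_j\langle x,f_j\rangle x_j\big\|^2=\sum_{j=1}^N\|x_j\|^2|\langle x,f_j\rangle|^2$. Comparing the two displays gives the first estimate.

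For $N=\infty$ the only subtlety is that the expectation computation must be carried out on partial sums, since the hypothesis bounds the full series but not, a priori, its truncations. This is handled by the identity
\[
\sum_{j=1}^n\vp_j\langle x,f_j\rangle x_j=\tfrac12\Big(\sum_{j=1}^{\infty}\vp_j'\langle x,f_j\rangle x_j+\sum_{j=1}^{\infty}\vp_j''\langle x,f_j\rangle x_j\Big),
\]
where $\vp_j'=\vp_j''=\vp_j$ for $j\le n$ and $\vp_j'=1=-\vp_j''$ for $j>n$: the triangle inequality and the hypothesis bound every partial sum in norm by $C\|x\|$, whence $\sum_{j=1}^n\|x_j\|^2|\langle x,f_j\rangle|^2=\E\big\|\sum_{j=1}^n\vp_j\langle x,f_j\rangle x_j\big\|^2\le C^2\|x\|^2$ for all $n$, and one lets $n\to\infty$. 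I do not anticipate a genuinely hard step here; if anything is a sticking point it is the $N=\infty$ bookkeeping just described, but even that is routine. The real content is the one-line observation that averaging over all sign patterns — rather than trying to pick a single favourable one — diagonalizes the multiplier into the weighted Bessel sums, after which the uniform lower bound $\|x_j\|\|f_j\|\ge b$ finishes the argument.
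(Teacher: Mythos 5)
Your proof is correct and follows essentially the same route as the paper: the parallelogram law (Rademacher averaging) diagonalizes the multiplier into the weighted square-function bound, the adjoint symmetry handles the second sequence, and the lower bound $\|x_j\|\|f_j\|\ge b$ converts this into the stated Bessel bounds. Your treatment of the $N=\infty$ case is in fact slightly more careful than the paper's, which reduces to finite truncations without spelling out why the partial sums inherit the bound $C\|x\|$; your averaging identity with $\vp_j'$ and $\vp_j''$ fills that in cleanly.
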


Note that the Bessel bound given in Proposition \ref{P:Par_intro} scales as $C^2$ instead of $C$ as in Conjecture \ref{C:SB quant}.
Furthermore, we give an example in Section \ref{S:Par} which shows that the scaling of $C^2$ is necessary here.  However, this does not contradict Conjecture \ref{C:SB quant} as the choice of $(d_j)_{j=1}^N$ specified in Proposition \ref{P:Par_intro} is not necessarily the optimal choice for Conjecture \ref{C:SB quant}.

The paper is organized as follows.  In Section \ref{S:fm} we prove that Conjecture \ref{C:SB} is equivalent to Conjecture \ref{C:SB quant}.  In Section \ref{S:Par} we use the Parallelogram Law to give a short proof of Proposition \ref{P:Par_intro}.  We prove our main results, including Theorem \ref{T:intro}, in Section \ref{S:equinorm}.

\section{Frame multipliers in finite dimensions}\label{S:fm}

Our goal for this section is to prove that Conjecture \ref{C:SB} on unconditionally convergent frame multipliers for infinite dimensional Hilbert spaces is equivalent to  Conjecture \ref{C:SB quant} on uniform quantitative bounds for frame multipliers on finite dimensional Hilbert spaces. 

\begin{thm}
The following are equivalent.
\begin{enumerate}
    \item  For every unconditonally convergent frame multiplier $M_{(m_j)_{j=1}^\infty,(x_j)_{j=1}^\infty,(f_j)_{j=1}^\infty}$  on a separable Hilbert space $H$ there exists sequences of constants $(c_j)_{j=1}^\infty$ and $(d_j)_{j=1}^\infty$ such that $c_j \overline{d_j}=m_j$ for all $j\in \N$ and both $(c_j x_j)_{j=1}^\infty$ and $(d_j f_j)_{j=1}^\infty$ are Bessel.\\
    \item  There exists a universal constant $\kappa>0$ so that the following holds.  Let $C>0$ and let $M_{(m_j)_{j=1}^n,(x_j)_{j=1}^n,(f_j)_{j=1}^n}$ be a multiplier on a finite dimensional Hilbert space $H$ such that
$$
 \Big\|\sum_{j=1}^n \vp_j m_j\langle x,f_j \rangle  x_j\Big\|\leq C\|x\|, \qquad\textrm{for all $x\in H$ and $|\vp_j|=1$}.
$$
Then there exists sequences of scalars $(c_j)_{j=1}^n$ and $(d_j)_{j=1}^n$ such that $c_j \overline{d_j}=m_j$ for all $1\leq j\leq n$ and both $(c_j x_j)_{j=1}^n$ and $(d_j f_j)_{j=1}^n$ are $C\kappa$-Bessel.
\end{enumerate}
\end{thm}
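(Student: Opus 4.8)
The plan is to establish the two implications separately: $(2)\Rightarrow(1)$ by compressing to finite-dimensional subspaces and passing to a limit, and $(1)\Rightarrow(2)$ by contradiction, packaging a sequence of finite-dimensional counterexamples into a single infinite-dimensional unconditionally convergent multiplier.

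For $(2)\Rightarrow(1)$, suppose $M_{(m_j),(x_j),(f_j)}$ is an unconditionally convergent multiplier on a separable Hilbert space $H$. First I would record that unconditional convergence forces a uniform bound $C<\infty$ with $\|\sum_{j\in A}\vp_j m_j\langle x,f_j\rangle x_j\|\le C\|x\|$ for every finite $A\subset\N$, every $|\vp_j|=1$, and every $x\in H$: for each fixed $x$ the set of finite signed subsums of the unconditionally convergent series $\sum_j m_j\langle x,f_j\rangle x_j$ is bounded, so the uniform boundedness principle applied to the finite-rank operators $x\mapsto\sum_{j\in A}\vp_j m_j\langle x,f_j\rangle x_j$ yields such a $C$. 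Setting aside the indices with $x_j=0$ or $f_j=0$ (at most countably many, and harmless: for these one may choose $c_j,d_j$ with $c_j\overline{d_j}=m_j$ and $\|c_jx_j\|^2+\|d_jf_j\|^2$ as small as desired, contributing only a bounded amount to the eventual Bessel bounds), we may assume all $x_j,f_j$ are nonzero. For each $n$ put $H_n=\spa\{x_j,f_j:j\le n\}$; the multiplier $M_{(m_j)_{j=1}^n,(x_j)_{j=1}^n,(f_j)_{j=1}^n}$ maps $H_n$ into itself and satisfies \eqref{E:unc} on $H_n$ with constant $C$, so $(2)$ produces scalars $(c_j^{(n)})_{j=1}^n,(d_j^{(n)})_{j=1}^n$ with $c_j^{(n)}\overline{d_j^{(n)}}=m_j$ for which $(c_j^{(n)}x_j)_{j=1}^n$ and $(d_j^{(n)}f_j)_{j=1}^n$ are $C\kappa$-Bessel in $H_n$, hence also in $H$ because $P_{H_n}$ is a contraction. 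Testing the Bessel inequality on the vector $c_j^{(n)}x_j$ gives $|c_j^{(n)}|\le(C\kappa)^{1/2}\|x_j\|^{-1}$ and similarly for $d_j^{(n)}$, so each scalar is bounded in $n$; a diagonal extraction gives $n_\ell\to\infty$ with $c_j^{(n_\ell)}\to c_j$ and $d_j^{(n_\ell)}\to d_j$ for all $j$, whence $c_j\overline{d_j}=m_j$. Finally, for any $x$ and any $m$, $\sum_{j=1}^m|\langle x,c_jx_j\rangle|^2=\lim_\ell\sum_{j=1}^m|\langle x,c_j^{(n_\ell)}x_j\rangle|^2\le C\kappa\|x\|^2$, and letting $m\to\infty$ shows $(c_jx_j)_{j=1}^\infty$ is Bessel; likewise $(d_jf_j)_{j=1}^\infty$.

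For $(1)\Rightarrow(2)$, I would argue by contradiction. If $(2)$ fails, then for every $k\in\N$ there is a finite-dimensional Hilbert space $H_k$ and a multiplier on $H_k$ satisfying \eqref{E:unc} with some constant $C_k$ but with no factorization $c_j\overline{d_j}=m_j^k$ making both $(c_jx_j^k)$ and $(d_jf_j^k)$ $C_kk$-Bessel. Rescaling the symbol of the $k$-th example by $C_k$ — and checking that the \eqref{E:unc}-constant and the two Bessel bounds scale correspondingly — we may assume every $C_k=1$; so each block satisfies \eqref{E:unc} with constant $1$ and admits no factorization making both sequences $k$-Bessel. Now let $H=\bigoplus_{k=1}^\infty H_k$ be the orthogonal $\ell_2$-sum, regard each $H_k$ as an orthogonal summand, and concatenate the blocks into one multiplier $M_{(m_j),(x_j),(f_j)}$ on $H$ indexed by $\bigsqcup_k\{1,\dots,n_k\}$. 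Since the blocks act in mutually orthogonal subspaces and each has \eqref{E:unc}-constant $1$, any finite signed partial sum of $\sum_j m_j\langle x,f_j\rangle x_j$ has squared norm at most $\sum_k\|P_{H_k}x\|^2=\|x\|^2$, and the partial sum over indices lying in blocks past the $K$-th has squared norm at most $\sum_{k>K}\|P_{H_k}x\|^2\to0$; hence this multiplier converges unconditionally on $H$. By $(1)$ there are scalars $(c_j),(d_j)$ with $c_j\overline{d_j}=m_j$ such that $(c_jx_j)$ is $B$-Bessel and $(d_jf_j)$ is $B'$-Bessel for some finite $B,B'$. Restricting to the indices of a single block $k$ and using orthogonality of the summands shows the restricted sequences are $B$- and $B'$-Bessel in $H_k$; choosing any $k>\max(B,B')$ then yields a factorization of $(m_j^k)$ making both $(c_jx_j^k)$ and $(d_jf_j^k)$ $k$-Bessel, contradicting the choice of the $k$-th example.

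I expect the main obstacle to be the implication $(1)\Rightarrow(2)$, where the countable family of finite-dimensional examples must be compressed into one unconditionally convergent multiplier: the delicate points are the normalization $C_k=1$, which requires tracking how the Bessel bounds scale when the symbol is rescaled, together with the verification that orthogonality of the summands simultaneously yields unconditional convergence of the global series and lets a global factorization be read off block-by-block. In the direction $(2)\Rightarrow(1)$ the only care needed is the bookkeeping for the degenerate indices and the routine diagonal limit argument.
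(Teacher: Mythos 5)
Your proof is correct and follows essentially the same route as the paper's: for (2)$\Rightarrow$(1) you use the uniform boundedness principle, finite-dimensional truncations, boundedness of the scalars, and a diagonal limit; for (1)$\Rightarrow$(2) you argue by contraposition, normalizing the finite-dimensional counterexamples and assembling them into an orthogonal direct sum whose blockwise estimate gives unconditional convergence. The only cosmetic difference is that you normalize by rescaling the symbol by $C_k^{-1}$ where the paper rescales the vectors by $C_k^{-1/2}$; these are equivalent.
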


\begin{proof}
We first assume that (2) is true and will prove that (1) is true.  Let $M_{(m_j)_{j=1}^\infty,(x_j)_{j=1}^\infty,(f_j)_{j=1}^\infty}$ be an unconditionally convergent frame multiplier on a separable Hilbert space $H$.  That is, 
\begin{equation}
    \sum_{j=1}^\infty \vp_j m_j \langle x,f_j\rangle x_j\hspace{1cm}\textrm{ converges for all $x\in H$ and $|\vp_j|=1$.}
\end{equation}
For each $m,n\in\N$ and $(\vp_j)_{j=m}^n$ with $|\vp_j|=1$ for all $m\leq j\leq n$ we let $T_{(\vp)_{j=m}^n}$ be the finite rank operator on $H$ defined by $T_{(\vp)_{j=m}^n}(x)=\sum_{j=m}^n \vp_j m_j \langle x,f_j\rangle x_j$ for all $x\in H$.  Let $x\in H$ and for the sake of contradiction we assume that $\sup_{ (\vp_j)_{j=m}^n} \|T_{(\vp)_{j=m}^n}(x)\|=\infty$.  By piecing finite sequences together, we can create an infinite sequence $(\vp_j)_{j=1}^\infty$ such that $\sup_{m\leq n}\|T_{(\vp)_{j=m}^n}(x)\|=\infty$.  This contradicts that $\sum_{j=1}^\infty \vp_j m_j \langle x,f_j\rangle x_j$ converges.  Hence, for all $x\in H$ there exists $C_x>0$ so that $\|T_{(\vp)_{j=m}^n}(x)\|\leq C_x \|x\|$ for all $(\vp)_{j=m}^n$.  By the Uniform Boundedness Principle there exists a uniform constant $C>0$ so that $\|T_{(\vp)_{j=m}^n}(x)\|\leq C \|x\|$ for all $(\vp)_{j=m}^n$ and all $x\in H$.  Thus, we have for all $N\in\N$ that 
\begin{equation}
    \Big\|\sum_{j=1}^N \vp_j m_j\langle x,f_j\rangle  x_j\Big\|\leq C\|x\|, \qquad\textrm{for all $x\in \text{span}_{1\leq j\leq N}x_j$ and $|\vp_j|=1$}.
\end{equation}
By (2), there exists $(c_{N,j})_{j=1}^n$ and $(d_{N,j})_{j=1}^n$ such that $c_{N,j} \overline{d_{N,j}}=m_j$ for all $j\in \N$ and both $(c_{N,j} x_j)_{j=1}^N$ and $(d_{N,j} f_j)_{j=1}^N$ are $\kappa C$-Bessel.  Without loss of generality, we assume that $m_j\neq0$, $x_j\neq0$, and $f_j\neq 0$ for all $j\in\N$.  Thus, we have that $|c_{N,j}|\leq \kappa C \|x_j\|^{-1}$ and $|d_{N,j}|\leq \kappa C \|f_j\|^{-1}$ for all $j\in\N$.  As $c_{N,j} \overline{d_{N,j}}=m_j$, we have for all $j\in \N$ that

\begin{equation}\label{E:cd}
     |m_j|\kappa^{-1} C^{-1} \|f_j\| \leq |c_{N,j}|\leq \kappa C \|x_j\|^{-1},\hspace{.1cm}\textrm{ and }\hspace{.1cm}
|m_j|\kappa^{-1} C^{-1} \|x_j\|  \leq |d_{N,j}|\leq \kappa C \|f_j\|^{-1}.
\end{equation}
Thus, for each $j\in\N$ we have positive uniform upper and lower bounds on $(c_{N,j})_{N=1}^\infty$ and $(d_{N,j})_{N=1}^\infty$. After passing to a subsequence, we may assume that there exists $(c_j)_{j=1}^\infty$ and $(d_j)_{j=1}^\infty$ so that $\lim_{N\rightarrow\infty}c_{N,j}=c_j$ and $\lim_{N\rightarrow\infty}d_{N,j}=d_j$ for all $j\in\N$.  By \eqref{E:cd} we have that $c_j$ and $d_j$ are non-zero and that $c_j\overline{d_j}=m_j$ for all $j\in \N$.  For all $n\leq N$ we have that $(c_{N,j} x_j)_{j=1}^n$ and $(d_{N,j} f_j)_{j=1}^n$ are $\kappa C$-Bessel.  Thus by taking the limit, we have for all $n\in\N$ that $(c_j x_j)_{j=1}^n$ and $(d_j f_j)_{j=1}^n$ are $\kappa C$-Bessel.  Hence, $(c_j x_j)_{j=1}^\infty$ and $(d_j f_j)_{j=1}^\infty$ are $\kappa C$-Bessel.  This proves (1).

We now assume that (2) is false.  Thus, for all $k\in\N$ there exists $C_k>0$ and a multiplier 
 $M_{(m_{k,j})_{j=1}^{n_k},(x_{k,j})_{j=1}^{n_k},(f_{j_k})_{j=1}^{n_k}}$  on a finite dimensional Hilbert space $H_k$ such that
$$
 \Big\|\sum_{j=1}^{n_k} \vp_j m_{k,j}\langle x,f_{k,j} \rangle  x_{k,j}\Big\|\leq C_k\|x\|, \qquad\textrm{for all $x\in H_k$ and $|\vp_j|=1$},
$$
but that for all sequences of scalars $(c_j)_{j=1}^{n_k}$ and $(d_j)_{j=1}^{n_k}$ such that $c_j \overline{d_j}=m_j$ for all $1\leq j\leq n_k$ we have that either $(c_j x_{k,j})_{j=1}^{n_k}$ or $(d_j f_{k,j})_{j=1}^{n_k}$ is not $k C_k$-Bessel.  By scaling both  $(x_{k,j})_{j=1}^{n_k}$ and $(f_{j_k})_{j=1}^{n_k}$ by $C_k^{-1/2}$ we have that  $M_{(m_{k,j})_{j=1}^{n_k},(C_k^{-1/2}x_{k,j})_{j=1}^{n_k},(C_k^{-1/2}f_{j_k})_{j=1}^{n_k}}$ is a multiplier on  $H_k$ such that 
$$
 \Big\|\sum_{j=1}^{n_k} \vp_j m_{k,j}\langle x,C_k^{-1/2}f_{k,j} \rangle  C_k^{-1/2}x_{k,j}\Big\|\leq \|x\|, \qquad\textrm{for all $x\in H_k$ and $|\vp_j|=1$},
$$
but that for all sequences of scalars $(c_j)_{j=1}^{n_k}$ and $(d_j)_{j=1}^{n_k}$ such that $c_j \overline{d_j}=m_j$ for all $1\leq j\leq k$ we have that either $\big(c_j C_k^{-1/2} x_{k,j}\big)_{j=1}^{n_k}$ or $\big(d_j C_k^{-1/2} f_{k,j}\big)_{j=1}^{n_k}$ is not $k$-Bessel. 

We now let $H=\oplus_{k=1}^\infty H_k$ and let $M_{(m_{j})_{j=1}^{\infty},(x_{j})_{j=1}^{\infty},(f_{k,j})_{j=1}^{\infty}}$ be the multiplier on $H$ which is the direct sum of the multipliers $\big(M_{(m_{k,j})_{j=1}^{n_k},(C_k^{-1/2}x_{k,j})_{j=1}^{n_k},(C_k^{-1/2}f_{k,j})_{j=1}^{n_k}}\big)_{k=1}^\infty$. That is, we enumerate by  $t\in\N$, where, if $k\in\N$, and $1\leq j\leq n_{k}$ are such that   $t=\sum_{i=1}^{k-1} n_i+j$ then $m_t=m_{k,j}$,  $x_t=C_k^{-1/2}x_{k,j}$ and $f_t=C_k^{-1/2}f_{j_k}$.
Let $x\in H$ and let $(\vp_t)_{t=1}^\infty$ be a sequence of scalars with  $\vp_t=\pm1$ for all $t\in\N$.  For each $k\in\N$ and $1\leq j\leq n_k$ we let $\vp_{k,j}=\vp_t$ where $t=\sum_{i=1}^{k-1} n_i+j$.
Thus, we have that

\begin{align*}
\Big\|\sum_{t=1}^{\infty} \vp_t m_t\langle x,f_t\rangle  x_t\Big\|^2
&= \sum_{k=1}^\infty\Big\|\sum_{j=1}^{n_k} \vp_{k,j} m_{k,j}\langle P_{H_k}x,C_k^{-1/2}f_{k,j} \rangle  C_k^{-1/2}x_{k,j}\Big\|^2\\
&\leq \sum_{k=1}^\infty \|P_{H_k}x\|^2=\|x\|^2.
\end{align*}
Thus, the multiplier $M_{(m_{t})_{t=1}^{\infty},(x_{t})_{t=1}^{\infty},(f_{t})_{t=1}^{\infty}}$ is unconditionally convergent.  However, there does not exist scalars $(c_t)_{t=1}^\infty$ and $(d_t)_{t=1}^\infty$ such that $c_t \overline{d_t}=m_t$ for all $t\in \N$ and both $(c_t x_t)_{t=1}^\infty$ and $(d_t f_t)_{t=1}^\infty$ are Bessel.  Thus we have that (1) is false.

\end{proof}

\section{Lower bounds and the parallelogram law}\label{S:Par}

The parallelogram law states that if $(x_j)_{j=1}^N$ is a sequence of vectors in a Hilbert space then 
\begin{equation} \label{eq:paralleogram} 2^{-N}\sum_{\vp_j=\pm1} \Big\|\sum_{j=1}^N \vp_j x_j \Big\|^2 =\sum_{j=1}^N  \|x_j\|^2.
\end{equation}
That is, if $(\vp_j)_{j=1}^N$ is a sequence of independent mean-$0$ random variables with $|\vp_j|=1$ then we have the following formula for the expectation.
\begin{equation}\label{eq:expectation}
\E \Big\|\sum_{j=1}^N \vp_j x_j \Big\|^2 =\sum_{j=1}^N \|x_j\|^2.
\end{equation}
Note that Conjecture \ref{C:SB quant} concerns pairs of vectors $(x_j)_{j=1}^N$ and $(f_j)_{j=1}^N$ such that the inequality $\|\sum_{j=1}^N \vp_j \langle x,f_j\rangle x_j\|\leq C\|x\|$ is satisfied for all vectors  $x$ and all $|\vp_j|=1$.  This inequality naturally lends itself to the parallelogram law.  We now restate and prove Proposition \ref{P:Par_intro} from the introduction.

\begin{prop}\label{P:Par}
Let $b,C>0$ and $N\in\N\cup\{\infty\}$.  Let $(x_j)_{j=1}^N,(f_j)_{j=1}^N$ be sequences in a Hilbert space $H$ such that $\|x_j\|\|f_j\|\geq b$  for all $1\leq j\leq N$ and that
\begin{equation}\label{E:unc par}
\Big\|\sum_{j=1}^N \vp_j\langle x, f_j\rangle x_j\Big\|\leq C\|x\|, \qquad\textrm{for all $x\in H$ and $|\vp_j|=1$}.
\end{equation}
Then $(d_jx_j)_{j=1}^N$ and $(d_j^{-1}f_j)_{j=1}^N$ have Bessel bound $b^{-1}C^2$ where $d_j=\|x_j\|^{-1/2}\|f_j\|^{1/2}$ for all $1\leq j\leq N$.
\end{prop}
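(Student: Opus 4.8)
The plan is to feed the hypothesis into the expectation identity \eqref{eq:expectation}, once directly and once after passing to adjoints. The only elementary fact needed about the weights is that $d_j^2=\|f_j\|/\|x_j\|$, which gives
\[
d_j^{-2}=\frac{\|x_j\|^2}{\|x_j\|\,\|f_j\|}\le b^{-1}\|x_j\|^2
\qquad\text{and}\qquad
d_j^{2}=\frac{\|f_j\|^2}{\|x_j\|\,\|f_j\|}\le b^{-1}\|f_j\|^2 .
\]

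First I would fix $x\in H$, let $(\vp_j)_{j=1}^N$ be independent mean-zero scalars with $|\vp_j|=1$ as in \eqref{eq:expectation}, and apply that identity to the vectors $\langle x,f_j\rangle x_j$. Combined with \eqref{E:unc par} this yields
\[
\sum_{j=1}^N|\langle x,f_j\rangle|^2\|x_j\|^2=\E\Big\|\sum_{j=1}^N\vp_j\langle x,f_j\rangle x_j\Big\|^2\le C^2\|x\|^2 .
\]
Since $|\langle x,d_j^{-1}f_j\rangle|^2=d_j^{-2}|\langle x,f_j\rangle|^2\le b^{-1}\|x_j\|^2\,|\langle x,f_j\rangle|^2$ by the first displayed inequality, summing over $j$ gives $\sum_{j=1}^N|\langle x,d_j^{-1}f_j\rangle|^2\le b^{-1}C^2\|x\|^2$, i.e. $(d_j^{-1}f_j)_{j=1}^N$ has Bessel bound $b^{-1}C^2$. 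For $(d_jx_j)_{j=1}^N$ I would dualize: with $T_{\vp}x=\sum_{j=1}^N\vp_j\langle x,f_j\rangle x_j$, the hypothesis reads $\|T_{\vp}\|\le C$, and the Hilbert-space adjoint $T_{\vp}^*y=\sum_{j=1}^N\overline{\vp_j}\langle y,x_j\rangle f_j$ has the same norm; as $(\overline{\vp_j})_{j=1}^N$ runs over all unimodular sequences, this is precisely the hypothesis of the proposition with $(x_j)_{j=1}^N$ and $(f_j)_{j=1}^N$ interchanged. Running the previous argument in this form produces $\sum_{j=1}^N|\langle y,x_j\rangle|^2\|f_j\|^2\le C^2\|y\|^2$, and then the second displayed inequality gives $\sum_{j=1}^N|\langle y,d_jx_j\rangle|^2\le b^{-1}C^2\|y\|^2$.

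It remains to address $N=\infty$. Here \eqref{E:unc par} forces $\sum_j\langle x,f_j\rangle x_j$ to converge unconditionally, so its finite signed partial sums are bounded in norm by $C\|x\|$: average a fixed partial sum against random signs on the complementary tail and use that the tail term has mean zero. Then \eqref{eq:paralleogram} applied along partial sums recovers $\sum_{j=1}^\infty|\langle x,f_j\rangle|^2\|x_j\|^2\le C^2\|x\|^2$, and the rest goes through verbatim. I do not expect a real obstacle: once the two displayed inequalities for $d_j$ are in hand the computation is short, and the only slightly non-obvious move is recognizing that passing to adjoints converts the one-sided hypothesis into the symmetric statement needed to bound $(d_jx_j)_{j=1}^N$.
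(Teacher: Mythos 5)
Your proposal is correct and follows essentially the same route as the paper: apply the parallelogram law (in expectation form) to $\sum_j\vp_j\langle x,f_j\rangle x_j$, use $d_j^2\|x_j\|^2=\|x_j\|\|f_j\|\geq b$ to extract the Bessel bound for $(d_j^{-1}f_j)$, and pass to the adjoint to swap the roles of $(x_j)$ and $(f_j)$. Your handling of $N=\infty$ (averaging over signs on the tail to bound finite signed partial sums) is slightly more explicit than the paper's one-line reduction to the finite case, but the substance is identical.
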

\begin{proof}
We first assume that $N\in\N$ is finite.
Let  $(\vp_j)_{j=1}^N$ be a sequence of independent mean-$0$ random variables with $|\vp_j|=1$.  Let $x\in H$.
We have that,
\begin{align*}
C^2\|x\|^2&\geq \E \Big\|\sum_{j=1}^N \vp_j\langle x, f_j\rangle  x_j\Big\|^2\\
&= \E \Big\|\sum_{j=1}^N \vp_j\langle x, 
d_j^{-1}f_j\rangle d_j x_j\Big\|^2\\
&= \sum_{j=1}^N |\langle x,
d_j^{-1}f_j \rangle|^2 d_j^2\|x_j\|^2\qquad\textrm{ by the parallelogram law,}\\
&=\sum_{j=1}^N |\langle x, 
d_j^{-1}f_j\rangle|^2 \|x_j\|\|f_j\|\\
&\geq b \sum_{j=1}^N |\langle x,
d_j^{-1}f_j\rangle|^2.
\end{align*}
This gives that $(d_j^{-1}f_j)_{j=1}^N$ has Bessel bound $b^{-1}C^2$.
For all $|\vp_j|=1$, the adjoint of the operator $S(x)=\sum_{j=1}^N \vp_j \langle x, f_j\rangle x_j$ is the operator  $S^*(f)=\sum_{j=1}^N \overline{\vp_j}\langle f, x_j\rangle f_j$.  Thus, the roles of $(f_j)_{j=1}^N$ and $(x_j)_{j=1}^N$ may be interchanged.  The same argument we used for  $(\overline{d_j}^{-1}f_j)_{j=1}^N$ now proves that $(d_j x_j)_{j=1}^N$ has Bessel bound $b^{-1}C^2$.

For the case $N=\infty$, we have that $(d_jx_j)_{j=1}^\infty$ and $(d_j^{-1}f_j)_{j=1}^\infty$ have Bessel bound $b^{-1}C^2$ if and only if $(d_jx_j)_{j=1}^n$ and $(d_j^{-1}f_j)_{j=1}^n$ have Bessel bound $b^{-1}C^2$ for all $n\in\N$.  Thus, the infinite case follows from the finite case.

\end{proof}

For the infinite case, it was previously known that if $(x_j)_{j=1}^\infty$ and $(f_j)_{j=1}^\infty$ satisfy the hypothesis of Proposition \ref{P:Par} then $(d_jx_j)_{j=1}^n$ and $(d_j^{-1}f_j)_{j=1}^n$ are both Bessel \cite{SB2}. The contribution of Proposition \ref{P:Par} is that it provides an explicit Bessel bound.

Note that the Bessel bound given in Proposition \ref{P:Par} scales as $C^2$ instead of $C$ as in Conjecture \ref{C:SB quant}.
The following example shows that this is necessary.  

\begin{example}\label{Ex}
Let $(e_j)_{j=1}^N$ be the unit vector basis of $\ell_2^N$.  We let $x_j=e_j$ and $f_j=e_1$ for all $1\leq j\leq N$.  Then we have that,
\begin{equation}\label{E:example}
\Big\|\sum_{j=1}^N \vp_j\langle x, e_1\rangle e_j\Big\|\leq N^{1/2}\|x\|, \qquad\textrm{for all $x\in \ell_2^N$ and $|\vp_j|=1$}.
\end{equation}
Note that equality in \eqref{E:example} is achieved  for $x=e_1$.  We have the values $b=1$ and $C=N^{1/2}$ for Proposition \ref{P:Par}, which gives that $(x_j)_{j=1}^N$ and $(f_j)_{j=1}^N$ must have Bessel bound $b^{-1}C^2=N$. Furthermore, $N$ is exactly the Bessel bound of $(f_j)_{j=1}^N=(e_1)_{j=1}^N$.
\end{example}

We have that the sequence of pairs $(e_j,e_1)_{j=1}^N$ in Example \ref{Ex} satisfies the unconditionality inequality with constant $N^{1/2}$ and yet $(e_1)_{j=1}^N$ has Bessel bound $N$.  However, we can shift weights so that $(N^{1/4}e_j)_{j=1}^N$ and $(N^{-1/4}e_1)_{j=1}^N$ each have Bessel bound $N^{1/2}$.  This shows that it may be necessary to shift weights to minimize the maximum of the Bessel bounds of $(d_jx_j)_{j=1}^N$ and $(d_j^{-1}f_j)_{j=1}^N$ even when $\|x_j\|=\|f_j\|$ for all $1\leq j\leq N$.

\section{Pairs of equi-norm frames}\label{S:equinorm}

 The flexibility of choosing a sequence of scalars $(d_j)_{j=1}^N$ is the most challenging aspect of Conjecture \ref{C:SB quant}.  One reason for the difficulty is that the Bessel bounds for $(d_j x_j)_{j=1}^N$ and $(d_j^{-1} f_j)_{j=1}^N$ are  global properties that apply to all $x\in H$, where as modifying the value for $d_k$ for some fixed $1\leq k\leq N$ makes a local change in one dimension for the frame operators for $(d_j x_j)_{j=1}^N$ and $(d_j^{-1} f_j)_{j=1}^N$.  Intuitively, a common problem in many different areas of mathematics is that it is difficult to optimize a global property through local modifications.  Another difficulty is that the optimal Bessel bound of the sequence $(d_j x_j)_{j=1}^N$ is a continuous function of the variables $(d_j)_{j=1}^N$, but it is not a smooth function of $(d_j)_{j=1}^N$. Indeed, for the simplest case where $(x_j)_{j=1}^N$ is an orthonormal basis, we have that the optimal Bessel bound for $(d_j x_j)_{j=1}^N$ is $B=\max_{1\leq j\leq N}|d_j|$.
 In Proposition \ref{P:Par} we explicitly choose  $d_j=\|x_j\|^{-1/2}\|f_j\|^{1/2}$ by using the local hypothesis $\|x_j\|\|f_j\|\geq b$  for all $1\leq j\leq N$.  This works well for large $b>0$, but (as shown in Example \ref{Ex}) this choice of $(d_j)_{j=1}^N$ may yield very large Bessel bound when $b$ is small relative to the unconditionality constant $C$.  
Because it is very difficult to choose the sequence $(d_j)_{j=1}^N$  in general, we will identify a situation where the optimal choice  is  $d_j=\overline{d_j}^{-1}=1$. That is, choosing $d_j=\overline{d_j}^{-1}=1$ will minimize the maximum of the Bessel bounds of $(d_j x_j)_{j=1}^N$ and $(\overline{d_j}^{-1} f_j)_{j=1}^N$.
We will make use of the following simple lemma. 

\begin{lem}\label{L:trace}
Let $(x_j)_{j=1}^N$ be a finite frame for $\ell_2^M$ with frame operator $S_X$, upper frame bound $B$, and lower frame bound $A$. Then the following hold:
\begin{enumerate}[(i)]
\item  $\emph{trace}(S_X)=\sum_{j=1}^N \|x_j\|^2$,\label{lem:eq1}
\item  $AM\leq \emph{trace}(S_X)\leq BM$,\label{lem:eq2}
\item If $(x_j)_{j=1}^N$  is $B$-tight, then $\emph{trace}(S_X)=BM$.\label{lem:eq3}
\item Let $U_X$ be the analysis operator of $(x_j)_{j=1}^N$ which has the following matrix form,
\[ U_X=
\begin{bmatrix}
    -      & x_1 & -  \\
    -      & x_2 & -  \\
 & \vdots & \\
    -      & x_N & -  \\
\end{bmatrix}_{N\times M}
=\begin{bmatrix}
    |     & | &   &  | \\
    c_1  & c_2  &\hdots & c_M \\
    |     & | &   &  | \\
\end{bmatrix}_{N\times M}
\]\label{lem:eq4}
\end{enumerate}
Then, the sequence of columns $(c_j)_{j=1}^M$ of $U_X$ is a basis for the column space and has upper frame bound $B$ and lower frame bound $A$.
\end{lem}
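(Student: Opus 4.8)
The plan is to derive all four items from the single identity $S_X = U_X^*U_X = \sum_{j=1}^N x_j\otimes x_j$, where $x_j\otimes x_j$ denotes the rank-one operator $y\mapsto\langle y,x_j\rangle x_j$ on $\ell_2^M$.

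For \eqref{lem:eq1} I would fix an orthonormal basis $(e_i)_{i=1}^M$ of $\ell_2^M$ and compute
\[
\trace(S_X)=\sum_{i=1}^M\langle S_Xe_i,e_i\rangle=\sum_{i=1}^M\sum_{j=1}^N|\langle e_i,x_j\rangle|^2=\sum_{j=1}^N\sum_{i=1}^M|\langle x_j,e_i\rangle|^2=\sum_{j=1}^N\|x_j\|^2,
\]
using Parseval's identity in the last step. For \eqref{lem:eq2}, the frame inequality $A\|x\|^2\le\langle S_Xx,x\rangle\le B\|x\|^2$ says precisely that every eigenvalue of the positive operator $S_X$ lies in $[A,B]$; since $\trace(S_X)$ is the sum of its $M$ eigenvalues (with multiplicity), $AM\le\trace(S_X)\le BM$. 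Item \eqref{lem:eq3} is then immediate: a $B$-tight frame has $S_X=B\,\Id_{\ell_2^M}$, so $\trace(S_X)=BM$ (equivalently, apply \eqref{lem:eq2} with $A=B$).

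For \eqref{lem:eq4} I would first note that $c_i=U_Xe_i$, so the column space of $U_X$ is $V:=\spa\{c_1,\dots,c_M\}=\ran(U_X)$; since $A>0$ forces $U_X$ to be injective, $(c_i)_{i=1}^M$ is a basis of the $M$-dimensional space $V$. For the frame bounds, given $y\in\ell_2^N$ I would write
\[
\sum_{i=1}^M|\langle y,c_i\rangle|^2=\sum_{i=1}^M|\langle U_X^*y,e_i\rangle|^2=\|U_X^*y\|^2=\langle U_XU_X^*y,y\rangle,
\]
and then observe that $U_XU_X^*$ acts on $V=\ran(U_X)=(\ker U_X^*)^\perp$ as a positive operator whose eigenvalues are exactly the (nonzero) eigenvalues of $U_X^*U_X=S_X$, hence all lie in $[A,B]$. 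This yields $A\|y\|^2\le\sum_{i=1}^M|\langle y,c_i\rangle|^2\le B\|y\|^2$ for every $y\in V$, i.e. $(c_i)_{i=1}^M$ has frame bounds $A$ and $B$ in the column space.

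The only step beyond routine bookkeeping is the spectral identification in \eqref{lem:eq4}: that the nonzero eigenvalues of $U_XU_X^*$ coincide with those of $S_X=U_X^*U_X$. I would handle this either by quoting the singular value decomposition of $U_X$, or in a self-contained way: if $S_Xv=\lambda v$ with $\lambda>0$ then $U_Xv\in V$ and $U_XU_X^*(U_Xv)=U_X S_Xv=\lambda(U_Xv)$, and the correspondence $v\mapsto U_Xv$ is a bijection between eigenspaces for each $\lambda>0$; since $V$ is spanned by such eigenvectors, $U_XU_X^*|_V$ has spectrum contained in $[A,B]$. Everything else reduces to the computations displayed above.
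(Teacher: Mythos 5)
Your proposal is correct and follows essentially the same route as the paper: the identical trace computation for \eqref{lem:eq1}, the standard eigenvalue bounds for \eqref{lem:eq2}--\eqref{lem:eq3} (which the paper simply cites from \cite{B2}), and for \eqref{lem:eq4} the same identification of the nonzero spectrum of $U_XU_X^*$ with that of $S_X=U_X^*U_X$ via singular values. You merely spell out in more detail the steps the paper leaves implicit.
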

\begin{proof}
The operator $S_X:\ell_2^M\rightarrow \ell_2^M$ is defined by $S_X(x)=\sum_{j=1}^N \langle x, x_j\rangle x_j$ for all $x\in \ell_2^M$.
Let $(e_k)_{k=1}^M$ be an orthonormal basis for $\ell_2^M$. Then,
$$
\text{trace}(S_X)=\sum_{k=1}^M\sum_{j=1}^N |\langle e_k,x_j\rangle|^2=\sum_{j=1}^N \sum_{k=1}^M |\langle e_k,x_j\rangle|^2=\sum_{j=1}^N \|x_j\|^2.
$$
This gives \eqref{lem:eq1}.  Conditions 
\eqref{lem:eq2} and \eqref{lem:eq3} follow easily (see \cite[Cor. 5.2]{B2}). 

Note that the upper frame bound $B$ is the largest eigenvalue of the frame operator $S_X=U_X^*U_X$ and is hence the square of the largest singular value of $U_X$.  As $U_X$ and $U_X^*$ have the same non-zero singular values, $B$ is the upper frame bound of $(c_j)_{j=1}^M$.  Likewise, $A$ is the lower frame bound of $(c_j)_{j=1}^M$.
\end{proof}

\noindent The following proposition gives a case where the optimal choice for $(d_j)_{j=1}^N$ is constant $1$. 
That is, we provide a situation where the local optimization of having $\|d_j x_j\|=\|d_j^{-1} f_j\|$ gives the global optimization of minimizing $\max\{B_{dX},B_{d^{-1}F}\}$ where $B_{dX}$ is the Bessel bound of $(d_j x_j)_{j=1}^N$ and $B_{d^{-1}F}$ is the Bessel bound of $(d_j^{-1} f_j)_{j=1}^N$.  Furthermore, this minimizes the condition numbers for both $(d_j x_j)_{j=1}^N$ and $(d_j^{-1} f_j)_{j=1}^N$ to be $1$ as in the sense of weighted frames \cite{BAG}.

\begin{prop}\label{P:1}
Let $(x_j)_{j=1}^N$ and $(f_j)_{j=1}^N$ be finite frames for $\ell_2^M$ with $\|x_j\|=\|f_j\|$, for all $1\leq j\leq N$. Let $A>0$ be a lower frame bound for either $(x_j)_{j=1}^N$ or $(f_j)_{j=1}^N$.  Then for all non-zero scalars $(d_j)_{j=1}^N$, if $B$ is a Bessel bound for both $(d_j x_j)_{j=1}^N$ and $(\frac{1}{d_j}f_j)_{j=1}^N$ then $B$ must be at least $A$.

In particular, we have that if $(x_j)_{j=1}^N$ and $(f_j)_{j=1}^N$ are both tight frames then they are both $A$-tight for some $A>0$ and
$$
A=\min_{d=(d_j)_{j=1}^N}\max\{B_{dX},B_{d^{-1}F}\},
$$
where $B_{dX}$ is the optimal Bessel bound of $(d_j x_j)_{j=1}^N$ and $B_{d^{-1}F}$ is the optimal Bessel bound of $(d_j^{-1} f_j)_{j=1}^N$. 
\end{prop}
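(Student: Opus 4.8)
The plan is to reduce the whole statement to a single trace computation combined with the elementary scalar inequality $t + t^{-1} \ge 2$ for $t > 0$. First I would record, via Lemma~\ref{L:trace}(i), that the frame operator $S_{dX}$ of $(d_j x_j)_{j=1}^N$ satisfies $\trace(S_{dX}) = \sum_{j=1}^N |d_j|^2 \|x_j\|^2$, and the frame operator $S_{d^{-1}F}$ of $(d_j^{-1} f_j)_{j=1}^N$ satisfies $\trace(S_{d^{-1}F}) = \sum_{j=1}^N |d_j|^{-2}\|f_j\|^2$. The only idea needed is to add these two identities so that, for each fixed $j$, the scalars $|d_j|^2$ and $|d_j|^{-2}$ pair up; using the hypothesis $\|x_j\| = \|f_j\|$ this gives
\[
\trace(S_{dX}) + \trace(S_{d^{-1}F}) = \sum_{j=1}^N \big(|d_j|^2 + |d_j|^{-2}\big)\|x_j\|^2 \;\ge\; 2\sum_{j=1}^N \|x_j\|^2 \;=\; 2\,\trace(S_X),
\]
where the inequality is $t + t^{-1} \ge 2$ applied termwise with $t = |d_j|^2$, and the final equality is Lemma~\ref{L:trace}(i) again (note this common value also equals $2\,\trace(S_F)$ since $\|x_j\| = \|f_j\|$).

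Next I would exploit the Bessel hypothesis. If $B$ is a Bessel bound for both $(d_j x_j)_{j=1}^N$ and $(d_j^{-1} f_j)_{j=1}^N$, then Lemma~\ref{L:trace}(ii) yields $\trace(S_{dX}) \le BM$ and $\trace(S_{d^{-1}F}) \le BM$, so the left-hand side of the display above is at most $2BM$. On the other side, if $A$ is a lower frame bound for $(x_j)_{j=1}^N$, then Lemma~\ref{L:trace}(ii) gives $\trace(S_X) \ge AM$; and if instead $A$ is a lower frame bound for $(f_j)_{j=1}^N$, the same conclusion holds since $\trace(S_F) = \trace(S_X)$. Chaining the estimates gives $2BM \ge 2\,\trace(S_X) \ge 2AM$, hence $B \ge A$, which is the first assertion.

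For the ``in particular'' statement, I would first note that if $(x_j)_{j=1}^N$ is $A_1$-tight and $(f_j)_{j=1}^N$ is $A_2$-tight, then Lemma~\ref{L:trace}(iii) gives $A_1 M = \trace(S_X) = \sum_j \|x_j\|^2 = \sum_j \|f_j\|^2 = \trace(S_F) = A_2 M$, so $A_1 = A_2 =: A$ and both frames are $A$-tight. Then the choice $d_j \equiv 1$ gives $B_{dX} = B_{d^{-1}F} = A$, so $\min_{d}\max\{B_{dX},B_{d^{-1}F}\} \le A$; conversely, for an arbitrary non-zero $(d_j)_{j=1}^N$ the first part of the proposition, applied with the lower frame bound $A$ and with $B = \max\{B_{dX}, B_{d^{-1}F}\}$ (a Bessel bound for both sequences), gives $\max\{B_{dX}, B_{d^{-1}F}\} \ge A$. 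Combining the two inequalities proves the claimed equality.

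I do not expect a genuine obstacle: the proof is a short chain of trace inequalities, and the ``hard part'' is really just spotting that adding the two trace identities makes the weights $|d_j|^{\pm 2}$ cancel in the sense of $t + t^{-1} \ge 2$. The one point that requires a little care is the asymmetric hypothesis (``$A$ a lower frame bound for \emph{either} $(x_j)$ \emph{or} $(f_j)$''), but this is harmless precisely because $\|x_j\| = \|f_j\|$ forces $\trace(S_X) = \trace(S_F)$, so the estimate $\trace \ge AM$ is available for whichever of the two frames $A$ was assumed for.
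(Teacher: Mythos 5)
Your proof is correct and follows essentially the same route as the paper: both add the two trace identities from Lemma~\ref{L:trace}(i), apply $t+t^{-1}\geq 2$ termwise using $\|x_j\|=\|f_j\|$, and conclude via Lemma~\ref{L:trace}(ii). Your explicit treatment of the ``in particular'' clause and of the asymmetric hypothesis (via $\trace(S_X)=\trace(S_F)$) matches what the paper leaves implicit.
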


\begin{proof}
Without loss of generality, we assume that $A$ is a lower frame bound for $(x_j)_{j=1}^N$.
Let $(d_j)_{j=1}^N$ be a sequence of non-zero scalars.  Let
$S_{dX}$ be the frame operator of $(d_j x_j)_{j=1}^N$ and $S_{d^{-1}F}$ be the frame operator of $(\frac{1}{d_j} f_j)_{j=1}^N$. We will prove that the  trace of $S_{dX}+S_{d^{-1}F}$ is at least $2AM$. Lemma~\ref{L:trace}~\eqref{lem:eq2} then gives that the minimal Bessel bound of
either $(d_j x_j)_{j=1}^N$ or $(\frac{1}{d_j}f_j)_{j=1}^N$ must be greater than or equal to $A$.

By Lemma \ref{L:trace}~\eqref{lem:eq1} we have that 
$$
\trace(S_{dX})=\sum_{j=1}^n |d_j|^2\|x_j\|^2,\qquad\textrm { and }\qquad
\trace(S_{d^{-1}F})=\sum_{j=1}^n |d_j|^{-2}\|f_j\|^2.
$$
By adding these equations together, we get that
\begin{align*}
\trace(S_{dX}+S_{d^{-1}F})&=\sum_{j=1}^N |d_j|^2\|x_j\|^2+|d_j|^{-2}\|f_j\|^2\\
&= \sum_{j=1}^N ( |d_j|^2+|d_j|^{-2})\|x_j\|^2 \qquad\textrm{ as }\|x_j\|=\|f_j\| \textrm{ for all }1\leq j\leq N,\\
&\geq  \sum_{j=1}^n 2\|x_j\|^2\qquad\textrm{as $t^2+t^{-2}$ is minimized  at $t= 1$,}\\
&=2\,\trace(S_X) \qquad \textrm {by Lemma~\ref{L:trace}~\eqref{lem:eq1},}\\
%&=2 B\ trace(Id_{\ell_2^M}) \qquad \textrm{ as $(x_j)_{j=1}^N$ is a $B$-tight frame}\\
&\geq 2AM \qquad \textrm {by Lemma~\ref{L:trace}~\eqref{lem:eq2}.}
\end{align*}
Thus $\trace(S_{dX}+S_{d^{-1}F})\geq 2AM$ and our proof is complete.
\end{proof}

 Proposition \ref{P:1} gives a general situation where we know the optimal values for the sequence $(d_j)_{j=1}^N$.  That is, if $(x_j)_{j=1}^N$ and $(f_j)_{j=1}^N$ are both tight frames with $\|x_j\|=\|f_j\|$, for all $1\leq j\leq N$ then choosing $d_j=1$ for all $1\leq j\leq N$ will minimize the maximum of the Bessel bounds of $(d_j x_j)_{j=1}^N$ and $(d^{-1}_j f_j)_{j=1}^N$.
 Other than specific examples, this is the only general situation where the optimal values for $(d_j)_{j=1}^N$ are known, and we will solve Conjecture \ref{C:SB quant} completely for this case.  In particular,  Theorem \ref{T:main1} gives that if such frames $(x_j)_{j=1}^N$ and $(f_j)_{j=1}^N$ satisfy \eqref{E:unc} for some constant $C>0$ then $(x_j)_{j=1}^N$ and $(f_j)_{j=1}^N$ both have Bessel bound $\frac{27}{4}K_1^{-4} C$.
 Note that this case includes many important examples of frames, including the finite unit norm tight frames.
 A sequence $(x_j)_{j=1}^N$ in $\ell_2^M$ is called a {\em finite unit norm tight frame or FUNTF} if $(x_j)_{j=1}^N$ is a tight frame and $\|x_j\|=1$ for all $1\leq j\leq N$.  Finite unit norm tight frames (FUNTFs) were introduced in \cite{BF}, and are of particular interest in both theory and applications \cite{BH}\cite{CFK}\cite{CMS}.   If $(x_j)_{j=1}^N$ is a FUNTF for $\ell_2^M$ then its frame bound is exactly $N M^{-1}$.  An {\em equi-norm tight frame} is a tight frame where all the vectors have the same norm, and in finite dimensions is just a rescaling of a FUNTF.  This gives the following corollary for Proposition \ref{P:Par} for the case that $(x_j)_{j=1}^N$ and $(f_j)_{j=1}^N$ are frames which are both equi-norm and tight.

 \begin{cor}\label{C:Par}
 Let $C\geq1$ and $M,N\in \N$.  Suppose that $(x_j)_{j=1}^N$ and $(f_j)_{j=1}^N$ are both equi-norm tight frames with 
 $$\Big\|\sum_{j=1}^N \vp_j\langle x, f_j\rangle  x_j\Big\|\leq C\|x\|, \qquad\textrm{ for all $x\in \ell_2^M$ and $|\vp_j|=1$}.
$$
Then $(dx_j)_{j=1}^N$ and $(d^{-1}f_j)_{j=1}^N$ have Bessel bound $N^{1/2}M^{-1/2}C$ where $d=\|x_1\|^{-1/2}\|f_1\|^{1/2}$.
 \end{cor}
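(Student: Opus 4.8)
The plan is to reduce Corollary~\ref{C:Par} to two elementary facts about the rescaled systems $(dx_j)_{j=1}^N$ and $(d^{-1}f_j)_{j=1}^N$: first, that in the equi-norm case both are again tight frames whose frame bound can be written down explicitly, namely $N\|x_1\|\|f_1\|/M$; and second, that the unconditionality hypothesis \eqref{E:unc par} forces $C\geq N^{1/2}M^{-1/2}\|x_1\|\|f_1\|$. Since the frame bound of a tight frame is exactly its optimal Bessel bound, combining these two facts will finish the proof.

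For the first fact, I would use equi-normality to see that $\|x_j\|=\|x_1\|$ and $\|f_j\|=\|f_1\|$ for every $j$, so the scalars $d_j=\|x_j\|^{-1/2}\|f_j\|^{1/2}$ of Proposition~\ref{P:Par} all equal the constant $d=\|x_1\|^{-1/2}\|f_1\|^{1/2}>0$. Hence $(dx_j)_{j=1}^N$ is a positive scalar multiple of the tight frame $(x_j)_{j=1}^N$, so it is tight, and likewise $(d^{-1}f_j)_{j=1}^N$ is tight. To identify the frame bounds I would invoke Lemma~\ref{L:trace}: if $(x_j)_{j=1}^N$ is $A_X$-tight, then by \eqref{lem:eq3} and \eqref{lem:eq1} one has $A_XM=\trace(S_X)=\sum_j\|x_j\|^2=N\|x_1\|^2$, so $A_X=N\|x_1\|^2/M$; since $S_{dX}=d^2S_X$, the frame $(dx_j)_{j=1}^N$ is $(d^2A_X)$-tight with $d^2A_X=N\|x_1\|\|f_1\|/M$. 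The identical computation applied to $(f_j)_{j=1}^N$ gives $A_F=N\|f_1\|^2/M$ and shows that $(d^{-1}f_j)_{j=1}^N$ is $(d^{-2}A_F)$-tight with $d^{-2}A_F=N\|x_1\|\|f_1\|/M$ as well. Thus both rescaled frames are tight with frame bound, hence optimal Bessel bound, equal to $N\|x_1\|\|f_1\|/M$.

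For the second fact I would feed a unit vector $x$ and a sequence $(\vp_j)_{j=1}^N$ of independent mean-zero random variables with $|\vp_j|=1$ into \eqref{E:unc par} and take expectations; by the parallelogram law \eqref{eq:expectation} together with the tightness of $(f_j)_{j=1}^N$,
\begin{align*}
C^2 &\geq \E\Big\|\sum_{j=1}^N \vp_j\langle x,f_j\rangle x_j\Big\|^2=\sum_{j=1}^N|\langle x,f_j\rangle|^2\|x_j\|^2\\
&=\|x_1\|^2\sum_{j=1}^N|\langle x,f_j\rangle|^2=\|x_1\|^2A_F=\frac{N\|x_1\|^2\|f_1\|^2}{M}.
\end{align*}
Hence $C\geq N^{1/2}M^{-1/2}\|x_1\|\|f_1\|$, and therefore $N\|x_1\|\|f_1\|/M=\big(N^{1/2}M^{-1/2}\big)\big(N^{1/2}M^{-1/2}\|x_1\|\|f_1\|\big)\leq N^{1/2}M^{-1/2}C$, so $N^{1/2}M^{-1/2}C$ dominates the optimal Bessel bound of each of $(dx_j)_{j=1}^N$ and $(d^{-1}f_j)_{j=1}^N$, which is the assertion.

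I do not expect a real obstacle: the argument is a short assembly of Lemma~\ref{L:trace}, the parallelogram law, and the observation that equi-normality collapses $(d_j)$ to a constant. The one point worth flagging in the write-up is that Proposition~\ref{P:Par} by itself only yields the weaker bound $\|x_1\|^{-1}\|f_1\|^{-1}C^2$; the improvement from $C^2$ to $C$ is exactly what the tight-frame structure buys, and $N^{1/2}M^{-1/2}C$ is a legitimate Bessel bound because it lies between the optimal value $N\|x_1\|\|f_1\|/M$ and Proposition~\ref{P:Par}'s estimate $\|x_1\|^{-1}\|f_1\|^{-1}C^2$ (indeed it is their geometric mean).
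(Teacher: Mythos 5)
Your proof is correct and follows essentially the same route as the paper: both arguments identify the common tight frame bound $N\|x_1\|\|f_1\|/M$ of the rescaled systems and then bound it by $N^{1/2}M^{-1/2}C$ using the parallelogram-law inequality $N\|x_1\|^2\|f_1\|^2/M\leq C^2$ (the paper obtains this by citing Proposition~\ref{P:Par} together with the FUNTF normalization, whereas you inline the same expectation computation). The only cosmetic difference is that you compute the tight bounds via Lemma~\ref{L:trace} rather than by rescaling to a FUNTF; the geometric-mean remark at the end is a nice sanity check but not needed.
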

 
\begin{proof}
As  $(x_j)_{j=1}^N$ and $(f_j)_{j=1}^N$ are both equi-norm tight frames there exists a constant $b>0$ so that $\|x_j\|\|f_j\|=b$ for all $1\leq j\leq N$.  We apply Proposition \ref{P:Par} to obtain that the sequences $(dx_j)_{j=1}^N$ and  $(d^{-1}f_j)_{j=1}^N$ have Bessel bound $b^{-1}C^2$.  As $(b^{-1/2}dx_j)_{j=1}^N$ is a FUNTF, it has frame bound $NM^{-1}$.   By scaling, $(dx_j)_{j=1}^N$ is a tight frame with frame bound $b NM^{-1}$.  This gives that $bNM^{-1}\leq b^{-1}C^2$.  Hence, $b\leq N^{-1/2}M^{1/2} C$.  As $(dx_j)_{j=1}^N$ is a tight frame with frame bound $bNM^{-1}$ we get that $(dx_j)_{j=1}^N$ has Bessel bound $N^{1/2}M^{-1/2} C$.  Likewise, $(d^{-1}f_j)_{j=1}^N$ has Bessel bound $N^{1/2}M^{-1/2}C$.
 \end{proof}
 
One interesting aspect of Corollary \ref{C:Par} is that it gives a Bessel bound for the case of equi-norm tight frames which does not explicitly state the value of $\|x_j\|$ and $\|f_j\|$.   The Bessel bound given in Corollary \ref{C:Par} depends on both $N$ and $M$.  There exist length $N$ FUNTFs for $\ell_2^M$ for all $N\geq M$, and hence the Bessel bound $N^{1/2}M^{-1/2}C$ can be arbitrarily large. One of our goals for the remainder of this section is to improve this by giving a uniform Bessel bound which is independent of $N$ and $M$.   Our proof will be probabilistic and will rely on the following case of Khintchine's Inequality (see for example Lemma 6.29 in \cite{FHHSPZ} for the general statement).

\begin{thm}[Khintchine's Inequality]\label{thm:khintchine}
There exists a constant $K_{1}>0$ such that for all $M\in\N$ and all scalars $(a_j)_{j=1}^N$,
$$2^{-N}\sum_{\delta_j=\pm1} \Big|\sum_{j=1}^N \delta_j a_j \Big|\geq K_{1} \left(\sum_{j=1}^N |a_j|^2\right)^{1/2}.
$$
That is, if $(\delta_j)_{j=1}^N$ is a sequence of independent symmetric random variables with $\delta_j=\pm1$, then we have the following lower bound for the expectation,
$$ \mathbb{E}\Big|\sum_{j=1}^N \delta_j a_j\Big|\geq K_{1}\big\|(a_j)_{j=1}^N\big\|_{\ell_2^N}.$$
\end{thm}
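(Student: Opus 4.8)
The plan is to prove this $L^1$--$L^2$ comparison by the classical moment–interpolation argument, since it both yields an explicit admissible constant and makes transparent that the constant is independent of $N$ (the only feature actually used elsewhere in the paper). By positive homogeneity of both sides we may assume $\sum_{j=1}^N|a_j|^2=1$. Writing $S=\sum_{j=1}^N\delta_j a_j$ for a sequence $(\delta_j)_{j=1}^N$ of independent Rademacher variables, the statement reduces to the estimate $\E|S|\ge 1/\sqrt3$, and it suffices to control the second and fourth moments of $S$ and then interpolate.

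First I would record the second moment: since $\E[\delta_i\delta_j]=\mathbf 1_{i=j}$, expanding $|S|^2$ gives $\E|S|^2=\sum_{j=1}^N|a_j|^2=1$. Next comes the fourth-moment bound, which is the one computational point of the proof. Expanding $|S|^4$ and using that $\E[\delta_i\delta_j\delta_k\delta_l]$ equals $1$ when the four indices can be matched into equal pairs and $0$ otherwise, only a few pairing patterns survive; a short inclusion–exclusion argument (the terms with $i=j=k=l$ get counted three times and must be subtracted once) yields in the real case
$$\E|S|^4=3\Big(\sum_{j=1}^N a_j^2\Big)^2-2\sum_{j=1}^N a_j^4\ \le\ 3\Big(\sum_{j=1}^N|a_j|^2\Big)^2=3.$$
In the complex case one argues identically, using $\big|\sum_j a_j^2\big|^2\le\big(\sum_j|a_j|^2\big)^2$ to absorb the contribution of the pairing $\{i=k,\ j=l\}$; the bound $\E|S|^4\le 3$ persists.

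Finally I would interpolate. By Lyapunov's inequality (log-convexity of $p\mapsto\log\|S\|_p$) applied to the exponents $1<2<4$ we get $\|S\|_2\le\|S\|_1^{1/3}\|S\|_4^{2/3}$, hence $\E|S|=\|S\|_1\ge\|S\|_2^{3}\,\|S\|_4^{-2}\ge 1\cdot 3^{-1/2}$, which is the asserted inequality with $K_1=1/\sqrt3$. (One could instead quote Szarek's sharp value $K_1=1/\sqrt2$, but this is not needed.) I expect the only place requiring genuine care to be the fourth-moment computation — getting the index bookkeeping and the diagonal correction right, together with the minor adjustment for complex coefficients; the homogeneity reduction and the Hölder/Lyapunov step are routine, and nothing in the argument sees the value of $N$, which is exactly the point.
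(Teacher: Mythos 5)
Your proof is correct. Note that the paper does not actually prove this statement: it is quoted as a known result with a pointer to the literature (Lemma 6.29 of \cite{FHHSPZ}), so there is no in-paper argument to compare against. Your moment--interpolation route is the standard self-contained proof: the second-moment identity, the fourth-moment bound $\E|S|^4\le 3(\sum_j|a_j|^2)^2$ (which, as you note, survives in the complex case since the extra pairing contributes $|\sum_j a_j^2|^2\le(\sum_j|a_j|^2)^2$), and Lyapunov interpolation with exponents $1<2<4$ giving $\E|S|\ge\|S\|_2^{3}\|S\|_4^{-2}\ge 3^{-1/2}$ are all carried out correctly, and the resulting explicit constant $K_1=1/\sqrt{3}$ is admissible and manifestly independent of $N$, which is the only property the paper uses.
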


We use the notation $K_{1}$ for the constant in Khintchine's inequality because it gives a bound on the $\ell_1$-norm.  Khintchine's inequality may be used to compare any pair of $\ell_p$ and $\ell_q$-norms for $1\leq p,q<\infty$ (with a constant which depends on $p,q$), but we will only need it for comparing the $\ell_1$ and $\ell_2$ norms.  We are now ready to prove the following solution to Conjecture~\ref{C:SB quant} for the case of families of equi-norm vectors.  We will extend this to more general sets of vectors in Theorem \ref{T:main1}.

\begin{lem} \label{L:equinorm}
 Let $C,D,\beta>0$ and $N\in\N$.  Suppose that $(x_j)_{j=1}^N$ and $(f_j)_{j=1}^N$ are both sequences in a finite dimensional Hilbert space  which satisfy $\|x_j\|=\|f_j\|=D$ for all $1\leq j\leq N$ and
 $$\Big\|\sum_{j=1}^N \vp_j\langle x, f_j \rangle  x_j\Big\|\leq C\|x\|, \qquad\textrm{ for all $x\in \ell_2^M$ and $|\vp_j|=1$}.
$$
If the frame operator for $(f_j)_{j=1}^N$  has eigenvalues $\lambda_1\geq...\geq\lambda_M$   satisfying $\lambda_1\leq  \frac{\beta}{M}\sum_{j=1}^M\lambda_j$ then $(f_j)_{j=1}^N$ has Bessel bound $\frac{27}{4} K_{1}^{-4} \beta^2 C$, where $K_{1}$ is the constant in Khintchine's inequality. 

Likewise, if the frame operator for $(x_j)_{j=1}^N$  has eigenvalues $\lambda_1\geq...\geq\lambda_M$ which satisfy $\lambda_1\leq \frac{\beta}{M}\sum_{j=1}^M\lambda_j$ then $(x_j)_{j=1}^N$ has Bessel bound $\frac{27}{4} K_{1}^{-4} \beta^2 C$. 
\end{lem}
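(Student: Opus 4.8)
The plan is to extract from the unconditionality hypothesis the uniform lower bound $C\ge K_1^2ND^2/M$, and then to read off the Bessel bound of $(f_j)_{j=1}^N$ from the trace identity $\sum_{j=1}^M\lambda_j=\sum_{j=1}^N\|f_j\|^2=ND^2$ (Lemma~\ref{L:trace}~\eqref{lem:eq1}) together with the eigenvalue assumption. Since the adjoint of the operator $x\mapsto\sum_j\vp_j\langle x,f_j\rangle x_j$ is $y\mapsto\sum_j\overline{\vp_j}\langle y,x_j\rangle f_j$ (as in the proof of Proposition~\ref{P:Par}), the hypothesis is symmetric under interchanging $(x_j)_{j=1}^N$ and $(f_j)_{j=1}^N$, so it suffices to bound the Bessel bound of $(f_j)_{j=1}^N$, the statement for $(x_j)_{j=1}^N$ then following by symmetry.

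The crucial step is to turn the operator–norm bound into an $\ell_1$ estimate, using that $C$ controls \emph{all} unimodular multipliers simultaneously but \emph{not} yet using Khintchine's inequality. For fixed unit vectors $x,u\in\ell_2^M$, choose $\vp_j$ of modulus one with $\vp_j\langle x,f_j\rangle\langle x_j,u\rangle=|\langle x,f_j\rangle|\,|\langle x_j,u\rangle|$ for every $j$; then
\[
\sum_{j=1}^N|\langle x,f_j\rangle|\,|\langle x_j,u\rangle|
=\Big\langle\sum_{j=1}^N\vp_j\langle x,f_j\rangle x_j,\ u\Big\rangle
\le\Big\|\sum_{j=1}^N\vp_j\langle x,f_j\rangle x_j\Big\|\le C .
\]
Thus $\sum_{j}|\langle x,f_j\rangle|\,|\langle x_j,u\rangle|\le C$ for every pair of unit vectors $x,u$.

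I would then randomise both test vectors and apply Khintchine to each factor separately. Fix an orthonormal basis $(e_k)_{k=1}^M$ of $\ell_2^M$, take independent Rademacher sequences $(\delta_k)_{k=1}^M$ and $(\delta_k')_{k=1}^M$, and set $x=M^{-1/2}\sum_k\delta_k'e_k$ and $u=M^{-1/2}\sum_k\delta_ke_k$, which are unit vectors. By Theorem~\ref{thm:khintchine}, $\E_{\delta'}|\langle x,f_j\rangle|\ge K_1M^{-1/2}\|f_j\|=K_1M^{-1/2}D$ and likewise $\E_{\delta}|\langle x_j,u\rangle|\ge K_1M^{-1/2}D$. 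Taking expectations in the inequality above and using independence termwise,
\[
C\ \ge\ \sum_{j=1}^N\E_{\delta'}|\langle x,f_j\rangle|\;\E_{\delta}|\langle x_j,u\rangle|\ \ge\ \frac{K_1^2D^2N}{M}.
\]

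To finish, the hypothesis $\lambda_1\le\frac{\beta}{M}\sum_{j=1}^M\lambda_j=\frac{\beta ND^2}{M}$ gives $ND^2/M\ge\lambda_1/\beta$, and combining with the last display yields $C\ge K_1^2\lambda_1/\beta$, i.e.\ $(f_j)_{j=1}^N$ has Bessel bound $\lambda_1\le\beta K_1^{-2}C$. Since $\lambda_1$ is at least the average of the $\lambda_j$ we have $\beta\ge1$, and $K_1\le1$ (set $N=1$ in Theorem~\ref{thm:khintchine}), so this is at most $\frac{27}{4}K_1^{-4}\beta^2C$, which is the claimed bound; the case of $(x_j)_{j=1}^N$ is identical by the symmetry noted above. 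The one point that requires foresight is the crucial step: one must reserve the freedom in the multiplier signs for the ``all-terms-aligned'' $\ell_1$ bound and spend Khintchine's inequality only on the two random test vectors. Applying Khintchine directly to the multiplier signs gives merely an $\ell_2$-sum, hence an inequality $\sum_j|\langle x,f_j\rangle|^2|\langle x_j,u\rangle|^2\le K_1^{-2}C^2$ and ultimately only a bound scaling like $C^2$, which the eigenvalue hypothesis cannot bring down to a linear bound in $C$.
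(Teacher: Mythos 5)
Your proof is correct, and it takes a genuinely different route from the paper's. Both arguments ultimately test the multiplier against two normalized Rademacher vectors, but the paper works with \emph{fixed} good realizations: it applies Khintchine's inequality to the rows of the analysis matrix of $(f_j)_{j=1}^N$ to fix a sign vector $\delta$ with $\sum_i|\langle f_i,\delta\rangle|\ge K_1DN$, uses the eigenvalue hypothesis a first time to show that a proportion $|I_\ap|\geq \beta^{-1}(K_1-\ap)^2N$ of the indices satisfy $|\langle f_i,\delta\rangle|\ge \ap D$, repeats the Khintchine step for the synthesis matrix of $(x_j)_{j=1}^N$ restricted to $I_\ap$, and finally optimizes over $\ap$ (taking $\ap=K_1/3$), which is precisely where the constants $\tfrac{27}{4}$ and $K_1^{-4}$ come from. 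You instead spend the phase freedom first, obtaining the bilinear $\ell_1$ bound $\sum_j|\langle x,f_j\rangle|\,|\langle x_j,u\rangle|\le C$ for \emph{every} pair of unit vectors, and only then randomize $x$ and $u$ independently, factoring each summand's expectation by independence; this eliminates the set $I_\ap$ and the optimization entirely, invokes the eigenvalue hypothesis only once through the trace identity, and yields the strictly sharper bound $\lambda_1\le \beta K_1^{-2}C$, which implies the stated bound since $\beta\ge 1$ and $K_1\le 1$. Your concluding remark --- that applying Khintchine directly to the multiplier signs can only produce an $\ell_2$ quantity and hence a bound scaling like $C^2$ --- is exactly the phenomenon exhibited by Proposition \ref{P:Par} and Example \ref{Ex}, so the division of labor you describe (phases for alignment, Khintchine for the test vectors) is indeed the essential point.
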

\begin{proof}

%There exists $D>0$ such that $\|x_j\|=\|f_j\|=D$ for all $1\leq j\leq N$. 
Let $M\in\N$ and assume that $(x_j)_{j=1}^N$ and $(f_j)_{j=1}^N$ are sequences of vectors in $\ell_2^M$. Let $\beta>0$ be such that  $\lambda_1\leq \beta M^{-1}\sum_{j=1}^M\lambda_j$ where $\lambda_1\geq...\geq\lambda_M$ are the eigenvalues of the frame operator of $(f_j)_{j=1}^N$.
Let $[a_{i,j}]_{N\times M}$ be the analysis matrix for the frame $(f_j)_{j=1}^N$ and let $[b_{j,i}]_{M\times N}$ be the synthesis matrix for the frame $(x_j)_{j=1}^N$.  That is,
\[ 
\begin{bmatrix}
    -      & f_1 & -  \\
    -      & f_2 & -  \\
 & \vdots & \\
    -      & f_N & -  \\
\end{bmatrix}%_{N\times M}
=
\begin{bmatrix}
    a_{1,1} &  \dots  & a_{1,M} \\
    a_{2,1} &  \dots  & a_{2,M} \\
    \vdots & \ddots & \vdots \\
    a_{N,1} &   \dots  & a_{N,M}
\end{bmatrix}%_{N\times M}
,\quad \textrm{ and }\quad
\begin{bmatrix}
    |     & | &   &  | \\
    x_1  & x_2  &\hdots & x_N \\
    |     & | &   &  | \\
\end{bmatrix}%_{M\times N}
=
\begin{bmatrix}
    b_{1,1} &   \dots  & b_{1,N} \\
    b_{2,1} &  \dots  & b_{2,N} \\
    \vdots &  \ddots & \vdots \\
    b_{M,1} &   \dots  & b_{M,N}
\end{bmatrix}%_{M\times N}.
\]
We now claim that the following two conditions are satisfied.

\begin{enumerate}[(i)]
\item  $(\sum_{j=1}^M a_{i,j}^2)^{1/2}=(\sum_{j=1}^M b_{j,i}^2)^{1/2}=D$  for all $1\leq i\leq N$,  \label{pr:i}
\item The sequence of columns of $[a_{i,j}]_{N\times M}$ has Bessel bound $\beta NM^{-1}D^2$.  \label{pr:ii}
\end{enumerate}

Note that \eqref{pr:i} is simply that $\|x_j\|=\|f_j\|=D$, for all $1\leq j\leq N$.  To prove \eqref{pr:ii} we note that $\lambda_1$ is the optimal Bessel bound of $(f_j)_{j=1}^N$ and hence $\lambda_1$ is the optimal Bessel bound of the  columns of $[a_{i,j}]_{N\times M}$ by Lemma \ref{L:trace}.  By taking the trace of the frame operator of $(f_j)_{j=1}^N$ we have that $\sum_{i=1}^M\lambda_i=\sum_{j=1}^N\|f_j\|^2=ND^2$.  As, $\lambda_1\leq \beta M^{-1}\sum_{j=1}^M\lambda_j$ we have that $\lambda_1\leq \beta NM^{-1}D^2$.  Thus, we have proven that \eqref{pr:ii} is true. The rest of the proof is concerned with bounding  $\beta NM^{-1}D^2$ in terms of $C$ and $\beta$.

 Let $(\delta_j)_{j=1}^M$  be a sequence of independent symmetric random variables with $\delta_j=\pm1$.
 By taking expectation we calculate,
\begin{align*}
\E \sum_{i=1}^N \Big|\sum_{j=1}^M \delta_j a_{i,j}\Big|&=  \sum_{i=1}^N \E \Big|\sum_{j=1}^M \delta_j a_{i,j}\Big|\\
&\geq K_{1} \sum_{i=1}^N \left(\sum_{j=1}^M |a_{i,j}|^2\right)^{1/2} \qquad\textrm{by Khintchine's Inequality,}\\
&= K_{1} \sum_{i=1}^N D= K_{1} D N \qquad\textrm{by \eqref{pr:i}.}
\end{align*}

\noindent Thus, we may fix a particular realization for  $(\delta_j)_{j=1}^M$ such that 
\begin{equation}\label{E:choice_1}
 \sum_{i=1}^N \Big|\sum_{j=1}^M \delta_j a_{i,j}\Big|\geq K_{1} D N.
\end{equation}

\noindent For each $\alpha>0$, we define a subset $I_\ap\subseteq \{1,2,...,N\}$ by,
$$I_\ap:= \Big\{i\in\{1,...,N\}\,:\,  \Big|\sum_{j=1}^M \delta_j a_{i,j}\Big|\geq D\ap  \Big\}.
$$
By \eqref{pr:ii}, the vector $(\sum_{j=1}^M \delta_j a_{i,j})_{i=1}^N $ is in the column space of $[a_{i,j}]_{N\times M}$ and has norm at most $\beta^{1/2} DN^{1/2}$. We will now calculate a lower bound for the cardinality of $I_\alpha$.

\begin{align*}
\beta^{1/2}DN^{1/2}&\geq\left(\sum_{i=1}^N \Big|\sum_{j=1}^M \delta_j a_{i,j}\Big|^2\right)^{1/2}\\
&\geq \left(\sum_{i\in I_\ap} \Big|\sum_{j=1}^M \delta_j a_{i,j}\Big|^2\right)^{1/2}\\
&\geq |I_\ap|^{-1/2}\sum_{i\in I_\ap} \Big|\sum_{j=1}^M \delta_j a_{i,j}\Big| \qquad\textrm{by Cauchy-Schwarz,}\\
&= |I_\ap|^{-1/2}\left(\sum_{i=1}^N \Big|\sum_{j=1}^M \delta_j a_{i,j}\Big|-\sum_{i\not\in I_\ap} \Big|\sum_{j=1}^M \delta_j a_{i,j}\Big|\right)\\
&\geq  |I_\ap|^{-1/2}\left(\sum_{i=1}^N \Big|\sum_{j=1}^M \delta_j a_{i,j}\Big|- (N-|I_\ap|)D\ap\right)\\
&\geq  |I_\ap|^{-1/2}\Big(K_{1} D N- (N-|I_\ap|)D\ap\Big)\qquad\textrm{ by \eqref{E:choice_1},}\\
&\geq  |I_\ap|^{-1/2} (K_{1}  -\ap)DN.
\end{align*}
Thus, we have that $|I_\ap|\geq \beta^{-1}(K_{1} -\ap)^2 N$.  We now apply similar estimates to the matrix $[b_{j,i}]_{M\times N}$ restricted to the columns in $I_\ap$.

Let $(\gamma_j)_{j=1}^M$  be a sequence of independent symmetric random variables with $\gamma_j=\pm1$.
 By taking the expectation we calculate,
\begin{align*}
\E \sum_{i\in I_\ap} \Big|\sum_{j=1}^M \gamma_j b_{j,i}\Big|&=  \sum_{i\in I_\ap} \E\Big|\sum_{j=1}^M \gamma_j b_{j,i}\Big|\\
&\geq K_{1} \sum_{i\in I_\ap} \left(\sum_{j=1}^M |b_{j,i}|^2\right)^{1/2} \qquad\textrm{by Khintchine's Inequality,}\\
&= K_{1} \sum_{i\in I_\ap} D \quad\textrm{ by (1),}\\
&=DK_{1}|I_\ap|\geq D \beta^{-1} K_{1} (K_{1} -\ap)^2 N.
\end{align*}

\noindent Thus, we may fix a particular realization for  $(\gamma_j)_{j=1}^M$ such that 
\begin{equation}\label{E:choice_2}
 \sum_{i\in I_\ap} \Big|\sum_{j=1}^M \gamma_j b_{i,j}\Big|\geq \beta^{-1} DK_{1}(K_{1}-\ap)^2 N.
\end{equation}
We have that $x:=(M^{-1/2} \delta_j)_{j=1}^M$ and $f:=(M^{-1/2}\gamma_j)_{j=1}^M$ are unit norm vectors in $\ell_2^M$.  For $1\leq i\leq N$, we define $$\vp_i:=\left\{\begin{array}{ll}\text{phase}( \langle x, f_i\rangle \langle x_i,f\rangle )^{-1}, & \textrm{ if } \langle x, f_i\rangle \langle x_i,f\rangle \neq 0,\\1, &\textrm{ otherwise.}
\end{array}\right. $$

Thus, we have that 

\begin{align*}
C\|x\|&\geq\Big\|\sum_{i=1}^N \vp_i \langle x, f_i\rangle x_i\Big\|\\
&\geq  \sum_{i=1}^N \vp_i \langle x_i, f\rangle \langle x, f_i\rangle\qquad \textrm{ as $\|f\|=1$,}\\
&= \sum_{i=1}^N |\langle x_i, f\rangle \langle x, f_i\rangle|\\
&= M^{-1} \sum_{i=1}^N \Big|\sum_{j=1}^M \delta_j a_{i,j}\Big| \Big|\sum_{j=1}^M \gamma_j b_{j,i}\Big|\\
&\geq M^{-1} \sum_{i\in I_\ap} \Big|\sum_{j=1}^M \delta_j a_{i,j}\Big| \Big|\sum_{j=1}^M \gamma_j b_{j,i}\Big|\\
&\geq M^{-1} \sum_{i\in I_\ap} \ap D \Big|\sum_{j=1}^M \gamma_j b_{j,i}\Big|\\
&\geq  \ap D^2 \beta^{-1} K_{1}(K_{1}-\ap)^2 NM^{-1}\qquad\qquad\textrm{ by \eqref{E:choice_2},}\\
&= \frac{4}{27} D^2 \beta^{-1} K_{1}^4 NM^{-1}\qquad \textrm{ for }\ap=K_{1}/3.
\end{align*}
Thus,  $\beta D^2NM^{-1}\leq \frac{27}{4} \beta^2  CK_{1}^{-4}$ as $\|x\|=1$. From $(ii)$ we deduce that $(f_j)_{j=1}^N$   has Bessel bound $\frac{27}{4} \beta^2 CK_{1}^{-4}$.  
\end{proof}

In Lemma \ref{L:equinorm} we considered frames $(x_j)_{j=1}^N$ and $(f_j)_{j=1}^N$ such that $\|x_j\|=\|f_j\|=D$ for all $1\leq j\leq N$.  We will now extend this to  the case where $\|x_j\|=\|f_j\|$ for all $1\leq j\leq N$ but we no longer require that $\|x_j\|= \|x_i\|$ when $i\neq j$.  The following theorem is a restatement of Theorem \ref{T:intro} from the introduction along with some immediate corollaries.

\begin{thm} \label{T:main1}
 Let $C,D,\beta>0$ and $N\in\N$.  Suppose that $(x_j)_{j=1}^N$ and $(f_j)_{j=1}^N$ are both sequences in a Hilbert space $H$ which satisfy $\|x_j\|=\|f_j\|$ for all $1\leq j\leq N$ and
 $$\Big\|\sum_{j=1}^N \vp_j\langle x, f_j\rangle  x_j\Big\|\leq C\|x\|, \qquad\textrm{ for all $x\in H$ and $|\vp_j|=1$}.
$$
If the frame operator for $(f_j)_{j=1}^N$  has eigenvalues $\lambda_1\geq...\geq\lambda_M$  satisfying $\lambda_1\leq \frac{\beta}{M}\sum_{j=1}^M\lambda_j$ then $(f_j)_{j=1}^N$ has Bessel bound $\frac{27}{4} K_{1}^{-4} \beta^2 C$, where $K_{1}$ is the constant in Khintchine's inequality. 
In particular, if $(f_j)_{j=1}^N$ has condition number $\beta$ then $(f_j)_{j=1}^N$ has Bessel bound $\frac{27}{4} K_{1}^{-4} \beta^2 C$ and if $(f_j)_{j=1}^N$ is a tight frame then $(f_j)_{j=1}^N$ has Bessel bound $\frac{27}{4} K_{1}^{-4} C$. 

Likewise, if the frame operator for $(x_j)_{j=1}^N$  has eigenvalues $\lambda_1\geq...\geq\lambda_M$ which satisfy $\lambda_1\leq \frac{\beta}{M}\sum_{j=1}^M\lambda_j$ then $(x_j)_{j=1}^N$ has Bessel bound $\frac{27}{4} K_{1}^{-4} \beta^2 C$. 
\end{thm}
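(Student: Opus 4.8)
The plan is to reduce Theorem~\ref{T:main1} to Lemma~\ref{L:equinorm} by an approximate ``splitting into copies'' argument. Discarding any index with $x_j=f_j=0$, write $r_j:=\|x_j\|=\|f_j\|>0$; since the frame operator of $(f_j)_{j=1}^N$ has $M$ eigenvalues, $H$ is $M$-dimensional and we identify it with $\ell_2^M$. First I would record that the hypothesis self-improves: for each fixed $x$, the map $(\mu_j)_{j=1}^N\mapsto\big\|\sum_{j=1}^N\mu_j\langle x,f_j\rangle x_j\big\|$ is convex on the polydisk $\{|\mu_j|\le1\}$, whose extreme points are exactly the points with $|\mu_j|=1$, so the displayed bound, assumed for all $|\mu_j|=1$, extends to all $x\in H$ and all $|\mu_j|\le1$. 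This is the feature that lets the approximation below absorb coefficients of modulus slightly less than $1$.

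Next I would fix $\eta>0$. As there are finitely many indices, for $D>0$ small enough the integers $k_j:=\lfloor r_j^2/D^2\rfloor$ satisfy $k_jD^2\le r_j^2\le(1+\eta)k_jD^2$. I then replace each $f_j$ by $k_j$ copies of $\tilde f_j:=(D/r_j)f_j$ and each $x_j$ by $k_j$ copies of $\tilde x_j:=(D/r_j)x_j$; every new vector has norm $D$ and $\|\tilde x_j\|=\|\tilde f_j\|$. Two facts must then be verified. First, the frame operator of the split $\tilde f$-system is $\widetilde S_F=\sum_{j=1}^N(k_jD^2/r_j^2)\langle\,\cdot\,,f_j\rangle f_j$, and since $k_jD^2/r_j^2\in[\tfrac1{1+\eta},1]$ one has $\tfrac1{1+\eta}S_F\le\widetilde S_F\le S_F$ in the Loewner order; hence its eigenvalues $\widetilde\lambda_i$ obey $\lambda_i/(1+\eta)\le\widetilde\lambda_i\le\lambda_i$, and so $\widetilde\lambda_1\le\lambda_1\le\tfrac\beta M\sum_i\lambda_i\le\tfrac{(1+\eta)\beta}{M}\sum_i\widetilde\lambda_i$. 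Second, for any signs $\vp_{j,\ell}$ ($1\le\ell\le k_j$) the split multiplier sum collapses to $\sum_{j=1}^N\mu_j\langle x,f_j\rangle x_j$ with $\mu_j=(D^2/r_j^2)\sum_{\ell=1}^{k_j}\vp_{j,\ell}$ and $|\mu_j|\le k_jD^2/r_j^2\le1$, so by the self-improved bound the split system satisfies the unconditionality hypothesis with the same constant $C$.

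I would then apply Lemma~\ref{L:equinorm} to the split system, with common norm $D$, constant $C$, and ratio parameter $(1+\eta)\beta$, to conclude that the split $\tilde f$-system has Bessel bound $\tfrac{27}{4}K_1^{-4}((1+\eta)\beta)^2C$, i.e.\ $\widetilde\lambda_1\le\tfrac{27}{4}K_1^{-4}(1+\eta)^2\beta^2C$. Together with $\lambda_1\le(1+\eta)\widetilde\lambda_1$ this yields $\lambda_1\le\tfrac{27}{4}K_1^{-4}(1+\eta)^3\beta^2C$, and letting $\eta\to0$ shows that $(f_j)_{j=1}^N$ has Bessel bound $\lambda_1\le\tfrac{27}{4}K_1^{-4}\beta^2C$. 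The claim for $(x_j)_{j=1}^N$ follows by the same argument with the roles of $X$ and $F$ interchanged, using that the adjoint of $x\mapsto\sum_j\vp_j\langle x,f_j\rangle x_j$ is $f\mapsto\sum_j\overline{\vp_j}\langle f,x_j\rangle f_j$ (as in the proof of Proposition~\ref{P:Par}). For the ``in particular'' clauses: if $(f_j)_{j=1}^N$ has condition number $\beta$ then $\lambda_1=\beta\lambda_M\le\tfrac\beta M\sum_i\lambda_i$ since $\lambda_M=\min_i\lambda_i\le\tfrac1M\sum_i\lambda_i$, and a tight frame is the case $\beta=1$.

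I expect the main obstacle to be exactly the non-uniformity of the norms $r_j$: one cannot split into an exactly fixed norm $D$ with integer multiplicities, so the argument must pass through the $\eta$-approximation, and its success rests on the two robustness features isolated above --- the survival of the unconditionality bound when passing from $|\vp_j|=1$ to $|\mu_j|\le1$, and the stability of the eigenvalue ratio under the Loewner sandwich $\tfrac1{1+\eta}S_F\le\widetilde S_F\le S_F$. Making the bookkeeping of these estimates line up with the sharp constant $\tfrac{27}{4}K_1^{-4}$ coming from Lemma~\ref{L:equinorm} is the only delicate point.
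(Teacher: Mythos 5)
Your proposal is correct and follows essentially the same route as the paper: reduce to Lemma~\ref{L:equinorm} by splitting each vector into copies of a common norm, using the convexity of $(\mu_j)\mapsto\|\sum_j\mu_j\langle x,f_j\rangle x_j\|$ on the polydisk to show the unconditionality constant $C$ survives the splitting. The only difference is bookkeeping: the paper first perturbs the norms to be rational so the split is exact and the frame operator is unchanged, whereas you split directly with a floor function and control the resulting perturbation of the frame operator via the Loewner sandwich $\tfrac{1}{1+\eta}S_F\le\widetilde S_F\le S_F$; both handle the approximation error by letting the parameter tend to zero.
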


\begin{proof}
Let $\beta>0$ be such that  $\lambda_1\leq \beta M^{-1}\sum_{j=1}^M\lambda_j$ where $\lambda_1\geq...\geq\lambda_M$ are the eigenvalues of the frame operator of $(f_j)_{j=1}^N$.

We first will assume that the values $\|x_j\|^2=\|f_j\|^2$ are non-zero and rational for all $1\leq j\leq N$.
Thus, there exists $K,k_1,...,k_N\in \N$ such that $\|x_j\|^2=\|f_j\|^2=\frac{k_j}{K}$ for all $1\leq j\leq N$.  We have that 
$$
\sum_{j=1}^N\sum_{i=1}^{k_j}\langle x,k_j^{-1/2}f_j\rangle k_j^{-1/2}x_j=\sum_{j=1}^N\langle x,f_j\rangle x_j,\hspace{1cm}\textrm{ for all }x\in H.
$$
Thus, the systems $\Big(\Big(\sqrt{\frac{1}{k_j}}x_j\Big)_{i=1}^{k_j}\Big)_{j=1}^N$ and $\Big(\Big(\sqrt{\frac{1}{k_j}}f_j\Big)_{i=1}^{k_j}\Big)_{j=1}^N$ are each equi-norm  sequences with the same frame operator  as $(x_j)_{j=1}^N$ and $(f_j)_{j=1}^N$ respectively.

We now claim for all  $x\in H$  that
\begin{equation}\label{E:claim}
\Big\|\sum_{j=1}^N \sum_{i=1}^{k_j} \vp_{i,j} \langle x,k_j^{-1/2} f_j\rangle k_j^{-1/2} x_j\Big\|\leq C\|x\|,\qquad\textrm{for all }|\vp_{i,j}|=1. 
\end{equation}
Assuming the claim, we have by Lemma \ref{L:equinorm} that $\Big(\Big(\sqrt{\frac{1}{k_j}}f_j\Big)_{i=1}^{k_j}\Big)_{j=1}^N$ and consequently also $(f_j)_{j=1}^N$ have Bessel bound $\frac{27}{4} K_{1}^{-4} \beta^2 C$. 

We now prove the claim that \eqref{E:claim} is satisfied.  Let $x\in H$.  We will first prove that  the set $\left\{\sum_{j=1}^N a_j \langle x,f_j\rangle x_j:\ |a_j|\leq 1\right\}$ is contained in   $\text{conv}\left\{\sum_{j=1}^N \varepsilon_j \langle x,f_j\rangle x_j:\ |\varepsilon_j|= 1\right\},$ the convex hull of the vectors $\sum_{j=1}^N \varepsilon_j \langle x,f_j\rangle x_j$.  To see this, let $(a_j)_{j=1}^N$ be a sequence of scalars and assume without loss of generality that $0\leq |a_1|\leq |a_2|\leq...\leq |a_N|\leq 1$.  We construct a sequence $(b_i)_{i=1}^{2N+1}$ in $[0,1]$ by $b_1=|a_1|$, $b_2=b_3=(|a_2|-|a_1|)/2$, $b_4=b_5=(|a_3|-|a_2|)/2$,..., $b_{2N}=b_{2N+1}=(1-|a_N|)/2$.  For each $1\leq j\leq N$ and $1\leq i\leq 2N+1$ we now give $\vp_{j,i}$ with $|\vp_{j,i}|=1$.   For $1\leq i\leq 2j-1$ we let $\vp_{j,i}=\text{phase}(a_j)$, and for $2j\leq i\leq 2N+1$ we let $\vp_{j,i}=(-1)^i$.  The telescoping aspect of the construction gives that $\sum_{i=1}^{2N+1} b_i=1$.  Furthermore,  $\sum_{i=1}^{2N+1}b_i\epsilon_{j,i}=a_j$, for all $1\leq j\leq N$.    We have that
$$
\sum_{i=1}^{2N+1} b_i\sum_{j=1}^N \varepsilon_{j,i} \langle x,f_j\rangle x_j =\sum_{j=1}^N\sum_{i=1}^{2N+1} b_i \varepsilon_{j,i} \langle x,f_j\rangle x_j=\sum_{j=1}^Na_j   \langle x,f_j\rangle x_j.
$$
This proves that $\left\{\sum_{j=1}^N a_j \langle x,f_j\rangle x_j:\ |a_j|\leq 1\right\}\subseteq \textrm{conv}\left\{\sum_{j=1}^N \varepsilon_j \langle x,f_j\rangle x_j:\ |\varepsilon_j|= 1\right\}$. 

Moreover, every vector $\sum_{i=1}^{L} b_i\sum_{j=1}^N \varepsilon_{j,i} \langle x,f_j\rangle x_j$ in  $\textrm{conv}\left\{\sum_{j=1}^N \varepsilon_j \langle x,f_j\rangle x_j:\ |\varepsilon_j|=1\right\}$ has the following bound on its norm.
\begin{align*}
\left\|\sum_{i=1}^{L} b_i\sum_{j=1}^N \varepsilon_{j,i} \langle x,f_j\rangle x_j \right\|&\leq 
\sum_{i=1}^{L} b_i\left\|\sum_{j=1}^N \varepsilon_{j,i} \langle x,f_j\rangle x_j \right\|\\ &\leq \sum_{i=1}^L b_i C\|x\|=C\|x\|.
\end{align*}
Thus, we have for all sequences $(a_j)_{j=1}^N$ with $|a_j|\leq 1$  that $\|\sum_{j=1}^N a_j \langle x, f_j\rangle x_j\|\leq C\|x\|$.  To prove our claim that \eqref{E:claim} is satisfied we let $a_j=\sum_{i=1}^{k_j}\vp_{j,i} k_j^{-1}$, for all $1\leq j\leq N$.  Note that $|a_j|\leq 1$ for all $1\leq j\leq N$.  Thus we have that
\begin{align*}
\Big\|\sum_{j=1}^N\sum_{i=1}^{k_j} \vp_{j,i} \langle x, k_j^{-1/2} f_j\rangle k_j^{-1/2}x_j\Big\|& = \Big\|\sum_{j=1}^N\Big(\sum_{i=1}^{k_j} \vp_{j,i} k_j^{-1}\Big)\langle x,  f_j\rangle x_j\Big\|\\
&=\Big\|\sum_{j=1}^N a_j \langle x, f_j\rangle x_j\Big\|
\leq C\|x\|.
\end{align*}
Thus, we have proven our claim that \eqref{E:claim} is satisfied and the proof is complete for the case that $\|x_j\|^2=\|f_j\|^2$ are non-zero and rational for all $1\leq j\leq N$.  

We now consider the general case where  $\|x_j\|^2=\|f_j\|^2$ for all $1\leq j\leq N$, but they may not all be rational.  We may throw out any terms which are zero, and thus we assume without loss of generality that $\|x_j\|^2=\|f_j\|^2$ are non-zero  for all $1\leq j\leq N$.  Let $\vp>0$ and let $\lambda_1\geq...\geq\lambda_M$ be the eigenvalues for the frame operator of $(f_j)_{j=1}^N$.  We may choose scalars $(\alpha_j)_{j=1}^N$ which are arbitrarily close to $1$ so that $\|\alpha_j x_j\|^2=\|\alpha_j f_j\|^2$ is rational for all $1\leq j\leq N$  and 
 $$\Big\|\sum_{j=1}^N \vp_j\langle \alpha_j f_j, x\rangle  \alpha_j x_j\Big\|\leq (C+\vp)\|x\|, \qquad\textrm{ for all $x\in H$ and $|\vp_j|=1$}.
$$
Furthermore, we may assume that if $\gamma_1\geq...\geq\gamma_M$ are the eigenvalues of the frame operator for $(\alpha_j f_j)_{j=1}^n$ then $(1+\vp)^{-1}\gamma_i\leq \lambda_i\leq (1+\vp)\gamma_i$ for all $1\leq i\leq M$.
Hence, by our previous argument,  $(\alpha_j f_j)_{j=1}^n$ has Bessel bound 
$\frac{27}{4} K_{1}^{-4} \beta^2(1+\vp)^4 (C+\vp)$.  As the scalars $(\alpha_j)_{j=1}^N$ may be chosen arbitrarily close to $1$ we have that $(f_j)_{j=1}^N$ has Bessel bound 
$\frac{27}{4} K_{1}^{-4} \beta^2(1+\vp)^4 (C+\vp)$.  As $\vp>0$ was arbitrary,  $(f_j)_{j=1}^N$ has Bessel bound 
$\frac{27}{4} K_{1}^{-4} \beta^2 C$.
\end{proof}

%\bibliographystyle{amsplain}
%\bibliography{../biblioall}

\end{document}